\newtheorem{theorem}{Theorem}[section]
\theoremstyle{plain}
\newtheorem{acknowledgement}[theorem]{Acknowledgement}
\newtheorem{definition}[theorem]{Definition}
\newtheorem{example}[theorem]{Example}
\newtheorem{lemma}[theorem]{Lemma}
\newtheorem{proposition}[theorem]{Proposition}
\newtheorem{remark}[theorem]{Remark}
\numberwithin{equation}{section}
\begin{document}
\title[The Non-Archimedean Stochastic Heat Equation]{The Non-Archimedean Stochastic Heat Equation driven by Gaussian Noise}
\author{W. A. Zúñiga-Galindo}
\address{Centro de Investigación y de Estudios Avanzados del Instituto Politécnico Nacional\\
Departamento de Matemáticas, Unidad Querétaro\\
Libramiento Norponiente \#2000, Fracc. Real de Juriquilla. Santiago de
Querétaro, Qro. 76230\\
México.}
\email{wazuniga@math.cinvestav.edu.mx}
\thanks{The author was partially supported by Conacyt (Mexico), Grant \# 127794.}
\subjclass[2000]{Primary 60H20, 60H15; Secondary 60G15, 46S10.}
\keywords{Stochastic pseudodifferential equations, Gaussian noise, process solution,
p-adic fields, non-Archimedean functional analysis.}

\begin{abstract}
We introduce and study a new class of non-Archimedean stochastic
pseudodifferential equations. These equations are the non-Archimedean
counterparts of the classical stochastic heat equations. We show the existence
and uniqueness of mild random field solutions for these equations.

\end{abstract}
\maketitle

\section{Introduction}

In this article we study a new class of stochastic pseudodifferential
equations in $\mathbb{R}_{+}\times\mathbb{Q}_{p}^{N}$, here $\mathbb{Q}_{p}$
denotes the field of $p$-adic numbers, driven by a spatially homogeneous
Gaussian noise. More precisely, we consider pseudodifferential equations of
the type%
\[
Lu\left(  t,x\right)  =\sigma\left(  u\left(  t,x\right)  \right)
\overset{\cdot}{W}\left(  t,x\right)  +b\left(  u(t,x\right)  =0\text{, }%
t\geq0\text{, }x\in\mathbb{Q}_{p}^{N}\text{,}%
\]
where $L=$ $\frac{\partial}{\partial t}+A\left(  \partial,\beta\right)  $,
$\beta>0$, with $A\left(  \partial,\beta\right)  $ a pseudodifferential
operator of the form $\mathcal{F}_{x\rightarrow\xi}\left(  A\left(
\partial,\beta\right)  \varphi\right)  =\left\vert a\left(  \xi\right)
\right\vert _{p}^{\beta}\mathcal{F}_{x\rightarrow\xi}\left(  A\varphi\right)
$, and $a\left(  \xi\right)  $ an elliptic polynomial. The coefficients
$\sigma$\ and $b$ are real-valued functions and $\overset{\cdot}{W}\left(
t,x\right)  $ is the formal notation for a Gaussian random perturbation
defined on some probability space. We assume that it is white in time and with
a homogeneous spatial correlation given by a function $f$, see Section
\ref{SectionNoise}. Our main result, see Theorem \ref{Thm2}, asserts the
existence and uniqueness of mild random field solutions for these equations.
The equations studied here are the non-Archimedean counterparts of the
Archimedean stochastic heat equations studied for instance in \cite{Dalang},
\cite{Da Prato} and \cite{Walsh}.

The pseudodifferential equations of the form
\[
\frac{\partial u\left(  t,x\right)  }{\partial t}+A\left(  \partial
,\beta\right)  u\left(  t,x\right)  =0
\]
are the $p$-adic counterparts of the Archimedean heat equations. Indeed, the
fundamental solutions of these equations (i.e. the heat kernels) are
transition density functions of Markov processes on $\mathbb{Q}_{p}^{N}$, see
Section \ref{SectionHeatEquations}. The one-dimensional $p$-adic heat equation
was introduced in \cite[Section \ XVI]{V-V-Z}, since then the theory of such
equations has been steadily developing, see e.g. \cite{A-K-S}, \cite{Koch},
\cite{Ch-Zuniga-2}, \cite{R-Zu}, \cite{Var}, \cite{Zu1} and the references
therein. This type of equations appear in some new models of complex systems
constructed by Avetisov et al., \cite{Av-3}-\cite{Av-5}, thus, the study of
stochastic versions of these equations is a natural and relevant problem.

From a more general perspective, the stochastic processes over the $p$-adics,
or more generally over ultrametric spaces, have attracted a lot of attention
during the last thirty years, see e.g. \cite{A-K1}-\cite{A-K2}, \cite{Av-3}%
-\cite{Av-5}, \cite{Bikulov}, \cite{Bi-Volovich}, \cite{Ch-Zuniga}%
-\cite{Ch-Zuniga-2}, \cite{Evans0}-\cite{Evans}, \cite{Ka-Koch}, \cite{Ka},
\cite{Kamizono}, \cite{Kamizono2}, \cite{Koch1}-\cite{Koch}, \cite{K-Kos}%
-\cite{K-Kos2},\ \cite{T-Z}, \cite{Var}, \cite{V-V-Z}, \cite{Zu1}, and the
references therein. From the point of view of the mathematical physics, the
interest on this type of stochastic processes comes from their connections
with models of complex systems. It has been proposed that the space of states
of certain complex systems, for instance proteins, have a hierarchical
structure, see e.g. \cite{F.et.al}, which can be put in turn in connection
with $p$-adic structures \cite{Av-3}-\cite{Av-5}.

Stochastic equations over $p$-adics have been studied intensively by many
authors, see e.g. \cite{Bikulov}, \cite{Bi-Volovich}, \cite{Evans},
\cite{Ka-Koch}, \cite{Kamizono}, \cite{Kamizono2}, \cite{Koch},\ \cite{K-Kos1}%
, \cite{K-Zy}. The $p$-adic Gaussian noise and the corresponding stochastic
integrals was studied in \cite{Bi-Volovich}, \cite{Evans}, \cite{Kamizono},
\cite{Kamizono2}, \cite{K-Zy}. All these articles consider processes and
stochastic integrals depending on $p$-adic variables. Here, we introduced a
non-Archimedean, spatially homogeneous Gaussian noise parametrized by a
non-negative real variable, the time variable, and by a $p$-adic vector, the
position variable. As far as we know such noises have been not studied before.
On the other hand, in \cite{Koch1}-\cite{Koch} Kochubei introduced stochastic
integrals with respect to the `$p$-adic Brownian motion' generated by the
one-dimensional heat equation. This is a non-Gaussian process parametrized by
non-negative real variable and by a $p$-adic variable.

The article is organized as follows. In Section \ref{Sect2}, we review some
basic facts about $p$-adic analysis. In Section \ref{Sect3}, we review some
aspects of the parabolic type pseudodifferential equations needed for other
sections. In Section \ref{Sect4}, we prove a $p$-adic version of the
Bochner-Schwartz Theorem, see Theorem \ref{Thm1C}. In Section \ref{Sect5}, we
review the stochastic integration with respect to Hilbert-space-valued Wiener
processes, and introduce the Gaussian noise $W$ and its associated cylindrical
process, see Proposition \ref{Prop1}. We also give some results about the
spectral measure of $W$, see Theorem \ref{Thm1}. Finally, we give a result,
Proposition \ref{Prop2}, which gives us \ examples of random distributions
that can be integrated with respect to $W$. It is interesting to note that the
proof of Proposition \ref{Prop2} is much more involved than the corresponding
result in the Archimedean setting, see e.g. proof of Proposition 3.3 in
\cite{Dalang-Quer}. This is due to the fact that in the $p$-adic setting, the
smoothing of a process requires `cutting' and convolution operations while, in
the Archimedean setting, it requires only a convolution operation. In Section
\ref{Sect6}, we prove the main result, see Theorem \ref{Thm2}. Like in
\cite{Dalang} we prove Theorem \ref{Thm2} under the `Hypotheses A and B,' here
we give \ an explicit and sufficient conditions to fulfill these hypotheses in
terms of the spectral measure of $W$, see Theorem \ref{Thm1} and Lemma
\ref{lemma5A}.

\section{\label{Sect2}$p$-adic analysis: essential ideas}

In this section we fix the notation and collect some basic results on $p$-adic
analysis that we will use through the article. For a detailed exposition the
reader may consult \cite{A-K-S}, \cite{Taibleson}, \cite{V-V-Z}.

\subsection{The field of $p$-adic numbers}

Along this article $p$ will denote a prime number. The field of $p-$adic
numbers $\mathbb{Q}_{p}$ is defined as the completion of the field of rational
numbers $\mathbb{Q}$ with respect to the $p-$adic norm $|\cdot|_{p}$, which is
defined as
\[
|x|_{p}=%
\begin{cases}
0 & \text{if }x=0\\
p^{-\gamma} & \text{if }x=p^{\gamma}\dfrac{a}{b},
\end{cases}
\]
where $a$ and $b$ are integers coprime with $p$. The integer $\gamma:=ord(x)$,
with $ord(0):=+\infty$, is called the\textit{ }$p-$\textit{adic order of} $x$.
We extend the $p-$adic norm to $\mathbb{Q}_{p}^{N}$ by taking%
\[
||x||_{p}:=\max_{1\leq i\leq N}|x_{i}|_{p},\qquad\text{for }x=(x_{1}%
,\dots,x_{N})\in\mathbb{Q}_{p}^{N}.
\]

We define $ord(x)=\min_{1\leq i\leq N}\{ord(x_{i})\}$, then $||x||_{p}%
=p^{-\text{ord}(x)}$. The metric space $\left(  \mathbb{Q}_{p}^{N}%
,||\cdot||_{p}\right)  $ is a complete ultrametric space. As a topological
space $\mathbb{Q}_{p}$\ is homeomorphic to a Cantor-like subset of the real
line, see e.g. \cite{A-K-S}, \cite{V-V-Z}.

Any $p-$adic number $x\neq0$ has a unique expansion $x=p^{ord(x)}\sum
_{j=0}^{+\infty}x_{j}p^{j}$, where $x_{j}\in\{0,1,2,\dots,p-1\}$ and
$x_{0}\neq0$. By using this expansion, we define \textit{the fractional part
of }$x\in\mathbb{Q}_{p}$, denoted $\{x\}_{p}$, as the rational number
\[
\{x\}_{p}=%
\begin{cases}
0 & \text{if }x=0\text{ or }ord(x)\geq0\\
p^{\text{ord}(x)}\sum_{j=0}^{-ord(x)-1}x_{j}p^{j} & \text{if }ord(x)<0.
\end{cases}
\]
For $\gamma\in\mathbb{Z}$, denote by $B_{\gamma}^{N}(a)=\{x\in\mathbb{Q}%
_{p}^{N}:||x-a||_{p}\leq p^{\gamma}\}$ \textit{the ball of radius }$p^{\gamma
}$ \textit{with center at} $a=(a_{1},\dots,a_{N})\in\mathbb{Q}_{p}^{N}$, and
take $B_{\gamma}^{N}(0):=B_{\gamma}^{N}$. Note that $B_{\gamma}^{N}%
(a)=B_{\gamma}(a_{1})\times\cdots\times B_{\gamma}(a_{n})$, where $B_{\gamma
}(a_{i}):=\{x\in\mathbb{Q}_{p}:|x-a_{i}|_{p}\leq p^{\gamma}\}$ is the
one-dimensional ball of radius $p^{\gamma}$ with center at $a_{i}\in
\mathbb{Q}_{p}$. The ball $B_{0}^{N}$ equals the product of $N$ copies of
$B_{0}:=\mathbb{Z}_{p}$, \textit{the ring of }$p-$\textit{adic integers}. We
denote by $\Omega(\left\Vert x\right\Vert _{p})$ the characteristic function
of $B_{0}^{N}$. For more general sets, say Borel sets, we use ${\LARGE 1}%
_{A}\left(  x\right)  $ to denote the characteristic function of $A$.

\subsection{The Bruhat-Schwartz space}

A complex-valued function $\varphi$ defined on $\mathbb{Q}_{p}^{N}$ is
\textit{called locally constant} if for any $x\in\mathbb{Q}_{p}^{N}$ there
exists an integer $l(x)\in\mathbb{Z}$ such that%
\begin{equation}
\varphi(x+x^{\prime})=\varphi(x)\text{ for }x^{\prime}\in B_{l(x)}^{N}.
\label{local_constancy}%
\end{equation}
A function $\varphi:\mathbb{Q}_{p}^{N}\rightarrow\mathbb{C}$ is called a
\textit{Bruhat-Schwartz function (or a test function)} if it is locally
constant with compact support. The $\mathbb{C}$-vector space of
Bruhat-Schwartz functions is denoted by $\mathcal{D}(\mathbb{Q}_{p}%
^{N}):=\mathcal{D}$. We will denote by $\mathcal{D}_{\mathbb{R}}%
(\mathbb{Q}_{p}^{N})$\ the $\mathbb{R}$-vector space of Bruhat-Schwartz functions.

For $\varphi\in\mathcal{D}(\mathbb{Q}_{p}^{N})$, the largest of such number
$l=l(\varphi)$ satisfying (\ref{local_constancy}) is called \textit{the
exponent of local constancy of} $\varphi$.

Let $\mathcal{D}^{\prime}(\mathbb{Q}_{p}^{N}):=\mathcal{D}^{\prime}$ denote
the set of all functionals (distributions) on $\mathcal{D}(\mathbb{Q}_{p}%
^{N})$. All functionals on $\mathcal{D}(\mathbb{Q}_{p}^{N})$ are continuous.

Set $\chi_{p}(y)=\exp(2\pi i\{y\}_{p})$ for $y\in\mathbb{Q}_{p}$. The map
$\chi_{p}(\cdot)$ is an additive character on $\mathbb{Q}_{p}$, i.e. a
continuos map from $\mathbb{Q}_{p}$ into the unit circle satisfying $\chi
_{p}(y_{0}+y_{1})=\chi_{p}(y_{0})\chi_{p}(y_{1})$, $y_{0},y_{1}\in
\mathbb{Q}_{p}$.

Given $\xi=(\xi_{1},\dots,\xi_{N})$ and $x=(x_{1},\dots,x_{N})\in
\mathbb{Q}_{p}^{N}$, we set $\xi\cdot x:=\sum_{j=1}^{N}\xi_{j}x_{j}$. The
Fourier transform of $\varphi\in\mathcal{D}(\mathbb{Q}_{p}^{N})$ is defined
as
\[
(\mathcal{F}\varphi)(\xi)=\int_{\mathbb{Q}_{p}^{N}}\chi_{p}(-\xi\cdot
x)\varphi(\xi)d^{N}x\quad\text{for }\xi\in\mathbb{Q}_{p}^{N},
\]
where $d^{N}x$ is the Haar measure on $\mathbb{Q}_{p}^{N}$ normalized by the
condition $vol(B_{0}^{N})=1$. The Fourier transform is a linear isomorphism
from $\mathcal{D}(\mathbb{Q}_{p}^{N})$ onto itself satisfying $(\mathcal{F}%
(\mathcal{F}\varphi))(\xi)=\varphi(-\xi)$. We will also use the notation
$\mathcal{F}_{x\rightarrow\xi}\varphi$ and $\widehat{\varphi}$\ for the
Fourier transform of $\varphi$.

\subsubsection{Fourier transform}

The Fourier transform $\mathcal{F}\left[  T\right]  $ of a distribution
$T\in\mathcal{D}^{\prime}\left(  \mathbb{Q}_{p}^{N}\right)  $ is defined by%
\[
\left(  \mathcal{F}\left[  T\right]  ,\varphi\right)  =\left(  T,\mathcal{F}%
\left[  \varphi\right]  \right)  \text{ for all }\varphi\in\mathcal{D}\left(
\mathbb{Q}_{p}^{N}\right)  \text{.}%
\]
The Fourier transform $f\rightarrow\mathcal{F}\left[  T\right]  $ is a linear
isomorphism from $\mathcal{D}^{\prime}\left(  \mathbb{Q}_{p}^{N}\right)
$\ onto $\mathcal{D}^{\prime}\left(  \mathbb{Q}_{p}^{N}\right)  $.
Furthermore, $T=\mathcal{F}\left[  \mathcal{F}\left[  T\right]  \left(
-\xi\right)  \right]  $.

\section{\label{Sect3}$p$-adic parabolic type pseudodifferential equations}

In this article we work exclusively with complex and real valued functions on
$\mathbb{Q}_{p}^{N}$. Having complex and real valued functions defined on a
locally compact topological group, we have the notion of continuous function
and may use the functional spaces $L^{\varrho}(\mathbb{Q}_{p}^{N},d^{N}x)$,
$\rho\geq1$ defined in the standard way. \ We also use the following standard notation:

\begin{itemize}
\item[(i)] $C(I, X)$ the space of continuous functions $u$ on a time interval
$I$ with values in $X$;

\item[(ii)] $C^{1}(I, X)$ the space of continuously differentiable functions
$u$ on a time interval $I$ such that $u^{\prime}\in X$;

\item[(iii)] $L^{1}(I, X)$ the space of measurable functions $u$ on $I$ with
values in $X$ such that $\| u\|$ is integrable;

\item[(iv)] $W^{1,1}(I,X)$ the space of measurable functions $u$ on $I$ with
values in $X$ such that $u^{\prime}\in L^{1}(I,X)$.
\end{itemize}

\subsection{Elliptic pseudodifferential operators}

Let $a\left(  \xi\right)  \in\mathbb{Q}_{p}\left[  \xi_{1},\ldots,\xi
_{n}\right]  $ be a non-constant polynomial. We say that $a\left(  \xi\right)
$ is \textit{an elliptic polynomial of degree }$d$, if it satisfies: (i)
$a\left(  \xi\right)  $ is a homogeneous polynomial of degree $d$, and (ii)
$a\left(  \xi\right)  =0$ $\Leftrightarrow$ $\xi=0$.

It is known that an elliptic polynomial satisfies%
\begin{equation}
C_{0}\left\Vert \xi\right\Vert _{p}^{d}\leq\left\vert a\left(  \xi\right)
\right\vert _{p}\leq C_{1}\left\Vert \xi\right\Vert _{p}^{d}\text{, for every
}\xi\in\mathbb{Q}_{p}^{n}, \label{elliptic}%
\end{equation}
for some positive constants $C_{0}=C_{0}(a)$, $C_{1}=C_{1}(a)$, cf.
\cite[Lemma 1]{Zu1}. Without loss of generality we will assume that $a\left(
\xi\right)  \in\mathbb{Z}_{p}\left[  \xi_{1},\ldots,\xi_{n}\right]  $.

Given a fixed $\beta>0$, a pseudodifferential operator of the form
$\boldsymbol{A}\left(  \partial,\beta\right)  \phi\left(  x\right)
=\mathcal{F}_{\xi\rightarrow x}^{-1}\left(  \left\vert a\left(  \xi\right)
\right\vert _{p}^{\beta}\mathcal{F}_{x\rightarrow\xi}\phi\right)  $, $\phi
\in\mathcal{D}$, is called \textit{an elliptic pseudodifferential operator of
degree d with symbol} $\left\vert a\right\vert _{p}^{\beta}$.

\begin{lemma}
\label{Lemmasemigroup}With the above notation the following assertions hold:

\noindent(i)%
\[%
\begin{array}
[c]{ccc}%
\mathcal{D} & \rightarrow & C\left(  \mathbb{Q}_{p}^{N},\mathbb{C}\right)
\cap L^{2}\left(  \mathbb{Q}_{p}^{N},d^{N}x\right) \\
&  & \\
\phi & \rightarrow & \boldsymbol{A}\left(  \partial,\beta\right)  \phi;
\end{array}
\]
\noindent(ii) the closure of the operator $\boldsymbol{A}\left(
\partial,\beta\right)  $, $\beta>0$ (let us denote it by $\boldsymbol{A}%
\left(  \partial,\beta\right)  $ again) with domain
\begin{equation}
Dom\left(  \boldsymbol{A}\left(  \partial,\beta\right)  \right)
:=Dom(\boldsymbol{A})=\left\{  f\in L^{2}:\left\vert a\right\vert _{p}^{\beta
}\widehat{f}\in L^{2}\right\}  \label{DomDa}%
\end{equation}
is a self-adjoint operator;

\noindent(iii) $-\boldsymbol{A}\left(  \partial,\beta\right)  $ is the
infinitesimal generator of a contraction $C_{0}$ semigroup $\left(
\mathcal{T}(t)\right)  _{t\geq0}$;

\noindent(iv) set
\begin{equation}
\Gamma(t,x):=\mathcal{F}_{\xi\rightarrow x}^{-1}\left(  e^{-t\left\vert
a\left(  \xi\right)  \right\vert _{p}^{\beta}}\right)  \text{ for }%
t>0,x\in\mathbb{Q}_{p}^{N}. \label{heatkernel}%
\end{equation}
Then%
\[
\left(  \mathcal{T}(t)f\right)  \left(  x\right)  =\left\{
\begin{array}
[c]{lll}%
\left(  \Gamma(t,\cdot)\ast f\right)  \left(  x\right)  & \text{for} & t>0\\
&  & \\
f\left(  x\right)  & \text{for} & t=0,
\end{array}
\right.
\]
for $f\in L^{2}$.
\end{lemma}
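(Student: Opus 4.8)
The plan is to transport the whole problem to the Fourier side, where $\boldsymbol{A}\left(\partial,\beta\right)$ becomes multiplication by the nonnegative, real-valued symbol $\left\vert a(\xi)\right\vert_{p}^{\beta}$, and then read each assertion off from the unitarity of $\mathcal{F}$ on $L^{2}$ together with the ellipticity bounds (\ref{elliptic}). For (i), I would use that $\mathcal{F}$ maps $\mathcal{D}$ onto $\mathcal{D}$, so for $\phi\in\mathcal{D}$ the function $\widehat{\phi}$ is locally constant with compact support; since $\left\vert a(\xi)\right\vert_{p}^{\beta}\leq C_{1}^{\beta}\left\Vert \xi\right\Vert_{p}^{d\beta}$ is bounded on that support, the product $\left\vert a(\xi)\right\vert_{p}^{\beta}\widehat{\phi}(\xi)$ is a bounded, compactly supported function, hence lies in $L^{1}\cap L^{2}$. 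Its inverse Fourier transform $\boldsymbol{A}\left(\partial,\beta\right)\phi$ is then continuous (the inverse Fourier transform of an $L^{1}$ function is continuous, by dominated convergence in the defining integral) and, by Plancherel, belongs to $L^{2}$.

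For (ii), let $M$ denote multiplication by $\left\vert a(\xi)\right\vert_{p}^{\beta}$ on $L^{2}$ with its maximal domain $\{g\in L^{2}:\left\vert a\right\vert_{p}^{\beta}g\in L^{2}\}$. Because the multiplier is real-valued and measurable, $M$ is self-adjoint, and $\boldsymbol{A}\left(\partial,\beta\right)=\mathcal{F}^{-1}M\mathcal{F}$ is self-adjoint since $\mathcal{F}$ is unitary. The only point requiring genuine work is that the closure of $\boldsymbol{A}\left(\partial,\beta\right)|_{\mathcal{D}}$ coincides with this maximal operator, i.e. that $\mathcal{D}$ is a core. I would prove this by a two-step approximation in the graph norm: given $g$ in the domain, first truncate $\widehat{g}$ to the balls $B_{n}^{N}$, so that dominated convergence yields $\mathbf{1}_{B_{n}^{N}}\widehat{g}\to\widehat{g}$ and $\left\vert a\right\vert_{p}^{\beta}\mathbf{1}_{B_{n}^{N}}\widehat{g}\to\left\vert a\right\vert_{p}^{\beta}\widehat{g}$ in $L^{2}$; then, since $\left\vert a\right\vert_{p}^{\beta}\leq C_{1}^{\beta}p^{nd\beta}$ on $B_{n}^{N}$, approximate the compactly supported $\mathbf{1}_{B_{n}^{N}}\widehat{g}$ by test functions supported in $B_{n}^{N}$, which simultaneously controls $\left\Vert\cdot\right\Vert_{2}$ and $\left\Vert\left\vert a\right\vert_{p}^{\beta}(\cdot)\right\Vert_{2}$.

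For (iii), the symbol is nonnegative, so $\boldsymbol{A}\left(\partial,\beta\right)\geq0$; a nonnegative self-adjoint operator has $-\boldsymbol{A}\left(\partial,\beta\right)$ as the generator of the strongly continuous contraction ($C_{0}$) semigroup $e^{-t\boldsymbol{A}\left(\partial,\beta\right)}$, by the spectral theorem (equivalently, by Hille--Yosida), and on the Fourier side this semigroup is multiplication by $e^{-t\left\vert a(\xi)\right\vert_{p}^{\beta}}$. For (iv), I would first check that $\Gamma(t,\cdot)$ is a genuine function for $t>0$: using the lower bound $\left\vert a(\xi)\right\vert_{p}^{\beta}\geq C_{0}^{\beta}\left\Vert\xi\right\Vert_{p}^{d\beta}$ and slicing $\mathbb{Q}_{p}^{N}$ into spheres $\left\Vert\xi\right\Vert_{p}=p^{m}$ of volume $p^{Nm}(1-p^{-N})$, the integral $\int e^{-t\left\vert a(\xi)\right\vert_{p}^{\beta}}d^{N}\xi$ is dominated by $(1-p^{-N})\sum_{m\in\mathbb{Z}}p^{Nm}e^{-tC_{0}^{\beta}p^{md\beta}}$, which converges. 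Hence $e^{-t\left\vert a\right\vert_{p}^{\beta}}\in L^{1}\cap L^{2}$ and $\Gamma(t,\cdot)\in L^{2}$ is well defined. For $f\in L^{2}$ the convolution $\Gamma(t,\cdot)\ast f$ is then well defined (Young), and the convolution theorem gives $\mathcal{F}(\Gamma(t,\cdot)\ast f)=e^{-t\left\vert a\right\vert_{p}^{\beta}}\widehat{f}=\mathcal{F}(\mathcal{T}(t)f)$; injectivity of $\mathcal{F}$ yields $\mathcal{T}(t)f=\Gamma(t,\cdot)\ast f$, while the case $t=0$ is the identity.

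The main obstacle is the core statement buried in (ii): one must verify that the test functions in $\mathcal{D}$ already generate the full self-adjoint operator, i.e. that the growth of the symbol at infinity creates no domain defect. The double approximation above settles this, and it is precisely the place where the $p$-adic structure enters essentially --- local constancy forces one to cut off on balls \emph{before} approximating --- rather than being a formal consequence of the unitarity of $\mathcal{F}$.
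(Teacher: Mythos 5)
Your proposal is correct: the paper gives no argument for this lemma, deferring entirely to \cite{Zu1}, \cite{C-H} and \cite{T-Z}, and the route you take --- conjugating by the unitary Fourier transform to reduce to the nonnegative real multiplication operator $g\mapsto\left\vert a\right\vert _{p}^{\beta}g$ on its maximal domain, then invoking the spectral theorem / Hille--Yosida and the convolution theorem --- is precisely the standard argument underlying those references. The one genuinely nontrivial point, namely that $\mathcal{D}$ is a core for the maximal operator (so that the closure of $\boldsymbol{A}\left(\partial,\beta\right)\vert_{\mathcal{D}}$ really has domain (\ref{DomDa})), is correctly singled out and correctly settled by your two-step graph-norm approximation, first truncating $\widehat{g}$ to balls and only then approximating by test functions.
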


\begin{proof}
The results follow from the properties of the heat kernels given in \cite{Zu1}
by using well-known techniques of semigroup theory, see e.g. \cite{C-H}.
Alternatively, the reader may consult \ \cite[Lemma 3.21, Lemma 3.23, Lemma
7.4, Theorem 7.5]{T-Z}\ for same results in a more general setting.
\end{proof}

\subsection{\label{SectionHeatEquations}$p$-adic heat equations}

Consider the following Cauchy problem:%

\begin{equation}
\left\{
\begin{array}
[c]{lll}%
\frac{\partial u\left(  t,x\right)  }{\partial t}+\boldsymbol{A}\left(
\partial,\beta\right)  u\left(  t,x\right)  =f\left(  t,x\right)  , &
x\in\mathbb{Q}_{p}^{N}, & t\in\left[  0,T\right] \\
&  & \\
u\left(  0,x\right)  =u_{0}\left(  x\right)  \in Dom(\boldsymbol{A}). &  &
\end{array}
\right.  \label{CauchyProblem}%
\end{equation}
We say that a function $u(x,t)$ is a \textit{solution of} (\ref{CauchyProblem}%
) if \linebreak$u\in C\left(  [0,T],Dom(\boldsymbol{A})\right)  \cap
C^{1}\left(  [0,T],L^{2}\right)  $ and $u$ satisfies equation
(\ref{CauchyProblem}) for all $t\in\left[  0,T\right]  $.

\begin{theorem}
\label{Thm1A}Let $\beta>0$ and let $f\in C\left(  [0,T],L^{2}\right)  $.
Assume that at least one of the following conditions is satisfied:

\begin{itemize}
\item[(i)] $f\in L^{1}\left(  (0,T),Dom(\boldsymbol{A})\right)  $;

\item[(ii)] $f\in W^{1,1}((0,T),L^{2})$.
\end{itemize}

Then Cauchy problem (\ref{CauchyProblem}) has a unique solution given by
\[
u(t,x)=\int_{\mathbb{Q}_{p}^{N}}\Gamma\left(  t,x-y\right)  u_{0}\left(
y\right)  d^{N}y+\int_{0}^{t}\left\{  \int_{\mathbb{Q}_{p}^{N}}\Gamma\left(
t-\tau,x-y\right)  f\left(  \tau,y\right)  d^{n}y\right\}  d\tau,
\]
where $\Gamma$ is defined in (\ref{heatkernel}).
\end{theorem}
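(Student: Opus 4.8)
The plan is to recast the Cauchy problem (\ref{CauchyProblem}) as an abstract inhomogeneous evolution equation in the Hilbert space $X=L^{2}(\mathbb{Q}_{p}^{N},d^{N}x)$ and then run the standard semigroup argument. By Lemma \ref{Lemmasemigroup}(iii)--(iv), the operator $-\boldsymbol{A}(\partial,\beta)$ generates a contraction $C_{0}$-semigroup $(\mathcal{T}(t))_{t\geq0}$ that acts by convolution with the heat kernel, $\mathcal{T}(t)g=\Gamma(t,\cdot)\ast g$. In this language (\ref{CauchyProblem}) becomes $u^{\prime}(t)+\boldsymbol{A}u(t)=f(t)$, $u(0)=u_{0}$, and the proposed formula is exactly the variation-of-constants (Duhamel) expression $u(t)=\mathcal{T}(t)u_{0}+\int_{0}^{t}\mathcal{T}(t-\tau)f(\tau)\,d\tau$, the convolutions in the two integrals being the spatial action of $\mathcal{T}(t)$ and $\mathcal{T}(t-\tau)$.

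First I would check that this $u$ is the unique \emph{mild} solution. Both terms are well-defined $X$-valued maps: the Bochner integral converges because $f\in C([0,T],X)$ and $\|\mathcal{T}(t-\tau)\|\leq1$, and $t\mapsto u(t)$ is continuous, so $u\in C([0,T],X)$. Uniqueness at the mild level is classical: if $v$ solves the homogeneous problem with $v(0)=0$, then for fixed $t$ the map $s\mapsto\mathcal{T}(t-s)v(s)$ has vanishing derivative, which forces $v\equiv0$.

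The core of the proof is upgrading the mild solution to a genuine one, i.e. establishing $u\in C([0,T],Dom(\boldsymbol{A}))\cap C^{1}([0,T],X)$. The homogeneous part $\mathcal{T}(t)u_{0}$ causes no difficulty, since $u_{0}\in Dom(\boldsymbol{A})$ gives $\mathcal{T}(t)u_{0}\in C([0,T],Dom(\boldsymbol{A}))\cap C^{1}([0,T],X)$ with $\frac{d}{dt}\mathcal{T}(t)u_{0}=-\boldsymbol{A}\mathcal{T}(t)u_{0}$. The work concerns the Duhamel term $w(t):=\int_{0}^{t}\mathcal{T}(t-\tau)f(\tau)\,d\tau$. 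Under hypothesis (i), $\boldsymbol{A}f\in L^{1}((0,T),X)$ and the closedness of $\boldsymbol{A}$ (self-adjoint by Lemma \ref{Lemmasemigroup}(ii)) lets one pull $\boldsymbol{A}$ through the integral, $\boldsymbol{A}w(t)=\int_{0}^{t}\mathcal{T}(t-\tau)\boldsymbol{A}f(\tau)\,d\tau$; combined with a difference-quotient computation that isolates the boundary contribution $f(t)$, this yields $w(t)\in Dom(\boldsymbol{A})$ and $w^{\prime}(t)=f(t)-\boldsymbol{A}w(t)$. Under hypothesis (ii), I would instead use $f\in W^{1,1}((0,T),X)$: after the substitution $w(t)=\int_{0}^{t}\mathcal{T}(\tau)f(t-\tau)\,d\tau$, differentiating under the integral gives $w^{\prime}(t)=\mathcal{T}(t)f(0)+\int_{0}^{t}\mathcal{T}(\tau)f^{\prime}(t-\tau)\,d\tau$, and the same difference-quotient argument identifies $w(t)\in Dom(\boldsymbol{A})$ with $\boldsymbol{A}w(t)=f(t)-w^{\prime}(t)$. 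In both cases $u=\mathcal{T}(\cdot)u_{0}+w$ satisfies $u^{\prime}+\boldsymbol{A}u=f$ with the required regularity.

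The main obstacle I anticipate is the rigorous justification of these interchanges --- pulling the closed operator $\boldsymbol{A}$ inside the Bochner integral under (i), and differentiating under the integral sign under (ii) --- together with the continuity in the strong operator topology of $\tau\mapsto\mathcal{T}(\tau)$ up to the singularity at $\tau=0$. Since these are precisely the steps of the classical theory of inhomogeneous abstract Cauchy problems, the cleanest route is to verify that $(\mathcal{T}(t))_{t\geq0}$ and $f$ meet the hypotheses of the corresponding variation-of-constants theorems (see e.g. \cite{C-H}) and then specialize those $X$-valued statements to $X=L^{2}(\mathbb{Q}_{p}^{N})$ and to the convolution form of $\mathcal{T}(t)$ provided by Lemma \ref{Lemmasemigroup}.
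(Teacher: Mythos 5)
Your proposal is correct and follows exactly the route the paper takes: the paper's proof simply invokes Lemma \ref{Lemmasemigroup} together with the classical theory of inhomogeneous abstract Cauchy problems in \cite{C-H}, which is precisely the variation-of-constants argument you spell out. You have merely written in the standard details that the paper leaves to the reference.
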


\begin{proof}
The \ result follows from Lemma \ref{Lemmasemigroup} by well-known results in
semigroup theory, see e.g. \cite{C-H}. Alternatively, the reader may consult
\cite[ Theorem 7.9]{T-Z}\ for same result in a more general setting.
\end{proof}

\begin{theorem}
\label{Thm1B} The heat kernel (or fundamental solution of (\ref{CauchyProblem}%
)) $\Gamma\left(  t,x\right)  $, $t>0$, satisfies the following:

\noindent(i) $\Gamma\left(  t,x\right)  \geq0$ for any $t>0$;

\noindent(ii) $\int_{\mathbb{Q}_{p}^{N}}\Gamma\left(  t,x\right)  d^{N}x=1$
for any $t>0$;

\noindent(iii) $\Gamma\left(  t,\cdot\right)  \in L^{1}(\mathbb{Q}_{p}^{N})$
for any $t>0$;

\noindent(iv) $\left(  \Gamma\left(  t,\cdot\right)  \ast\Gamma\left(
t^{\prime},\cdot\right)  \right)  \left(  x\right)  =\Gamma\left(
t+t^{\prime},x\right)  $ for any $t$, $t^{\prime}>0$;

\noindent(v) $\lim_{t\rightarrow0+}\Gamma\left(  t,x\right)  =\delta\left(
x\right)  $ in $\mathcal{D}^{\prime}$;

\noindent(vi) $\Gamma\left(  t,x\right)  \leq At\left(  t^{\frac{1}{d\beta}%
}+\left\Vert x\right\Vert _{p}\right)  ^{-d\beta-N}$ for any $x$
$\in\mathbb{Q}_{p}^{N}$ and $t>0$;

\noindent(vii) $\Gamma(x,t)$ is the transition density of a time- and space
homogenous Markov process which is bounded, right-continuous and has no
discontinuities other than jumps.
\end{theorem}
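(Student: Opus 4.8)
The plan is to read off the ``soft'' Fourier-analytic identities first and then to spend the real effort on the decay estimate (vi) and on the positivity (i), from which the remaining statements follow. Throughout I use that $\Gamma(t,\cdot)$ is, by (\ref{heatkernel}), the inverse Fourier transform of $e^{-t|a(\xi)|_{p}^{\beta}}$, so that $\widehat{\Gamma}(t,\cdot)(\xi)=e^{-t|a(\xi)|_{p}^{\beta}}$.

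The convolution law (iv) is immediate: the Fourier transform sends convolution to the pointwise product, and $e^{-t|a(\xi)|_{p}^{\beta}}\,e^{-t'|a(\xi)|_{p}^{\beta}}=e^{-(t+t')|a(\xi)|_{p}^{\beta}}$, so applying $\mathcal{F}^{-1}$ gives $\Gamma(t,\cdot)\ast\Gamma(t',\cdot)=\Gamma(t+t',\cdot)$. For the total mass (ii), once (iii) is available, $\int_{\mathbb{Q}_{p}^{N}}\Gamma(t,x)\,d^{N}x=\widehat{\Gamma}(t,\cdot)(0)=e^{-t|a(0)|_{p}^{\beta}}=1$, since $a$ is homogeneous of positive degree, whence $a(0)=0$. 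Property (v) is equally direct: for $\varphi\in\mathcal{D}$, Parseval gives $(\Gamma(t,\cdot),\varphi)=\int_{\mathbb{Q}_{p}^{N}}e^{-t|a(\xi)|_{p}^{\beta}}\widehat{\varphi}(\xi)\,d^{N}\xi$, and because $\widehat{\varphi}$ has compact support while $e^{-t|a(\xi)|_{p}^{\beta}}\to 1$ boundedly as $t\downarrow0$, dominated convergence yields $\int_{\mathbb{Q}_{p}^{N}}\widehat{\varphi}(\xi)\,d^{N}\xi=\varphi(0)=(\delta,\varphi)$.

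The technical core is the Gaussian-type bound (vi), with (iii) as a corollary. I would write $\Gamma(t,x)=\int_{\mathbb{Q}_{p}^{N}}\chi_{p}(\xi\cdot x)\,e^{-t|a(\xi)|_{p}^{\beta}}\,d^{N}\xi$ and decompose $\mathbb{Q}_{p}^{N}$ into the spheres $S_{j}=\{\|\xi\|_{p}=p^{j}\}$. Unlike the Archimedean case, the real parameter $t$ cannot be absorbed by a dilation, since dilations on $\mathbb{Q}_{p}^{N}$ are by powers of $p$; this forces an honest summation over $j$. Using the elementary identity $\int_{B_{m}^{N}}\chi_{p}(\xi\cdot x)\,d^{N}\xi=p^{mN}\,\Omega(p^{m}\|x\|_{p})$ one evaluates the character integral over each sphere explicitly, finding that it is supported on the shells $p^{j}\lesssim 1/\|x\|_{p}$ with an alternating top term. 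Estimating $e^{-t|a(\xi)|_{p}^{\beta}}$ from above by $e^{-C_{0}^{\beta}t\,p^{jd\beta}}$ via the ellipticity bound (\ref{elliptic}) and summing the resulting series, while balancing the two regimes $\|x\|_{p}\lesssim t^{1/(d\beta)}$ and $\|x\|_{p}\gtrsim t^{1/(d\beta)}$, produces $\Gamma(t,x)\le At\,(t^{1/(d\beta)}+\|x\|_{p})^{-d\beta-N}$. Since the exponent $d\beta+N$ exceeds $N$, this majorant is integrable over $\mathbb{Q}_{p}^{N}$, which gives (iii).

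The substantive point is positivity (i). I would prove it by showing that the symbol $e^{-t|a(\cdot)|_{p}^{\beta}}$ is a continuous positive-definite function, so that its inverse Fourier transform is a nonnegative measure; combined with the absolute continuity from (iii) this forces $\Gamma(t,\cdot)\ge0$. To reach positive-definiteness I would use the layer-cake representation $e^{-t\psi(\xi)}=t\int_{0}^{\infty}e^{-t\lambda}\,\mathbf{1}_{\{\psi(\xi)<\lambda\}}\,d\lambda$ with $\psi(\xi)=|a(\xi)|_{p}^{\beta}$, which reduces the claim to the nonnegativity of $\mathcal{F}^{-1}(\mathbf{1}_{\{|a(\xi)|_{p}\le r\}})$ for each $r$; here one exploits that these level sets are $\mathbb{Z}_{p}^{\times}$-invariant and scale homogeneously under multiplication by $p$. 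The delicate part is precisely this reduction: ellipticity only makes $\psi$ comparable to the radial function $\|\xi\|_{p}^{d\beta}$, and comparability does \emph{not} preserve negative-definiteness, so in the general, non-radial case the level sets are not balls and their Fourier transforms must be analysed directly (alternatively, one invokes the $p$-adic Bochner--Schwartz theorem of Section \ref{Sect4} in the same role). Finally, properties (i), (ii) and (iv) say that $\{\Gamma(t,\cdot)\}_{t>0}$ is a convolution semigroup of probability densities obeying Chapman--Kolmogorov, and (v) supplies the initial normalization, so Kolmogorov's extension theorem produces a space- and time-homogeneous Markov process with transition density $\Gamma$; the $C_{0}$-contraction semigroup of Lemma \ref{Lemmasemigroup}, together with the bound (vi), is Feller, and the general theory of Feller processes on the totally disconnected space $\mathbb{Q}_{p}^{N}$ yields a bounded, right-continuous modification whose only discontinuities are jumps, which is (vii). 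I expect the two genuine obstacles to be, first, extracting the sharp exponent $-d\beta-N$ from the oscillatory shell-sum while controlling cancellation for non-radial $a$, and second, establishing the nonnegativity of the Fourier transforms of the level-set indicators needed for (i).
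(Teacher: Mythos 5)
The paper does not actually prove Theorem \ref{Thm1B}: its ``proof'' is the single line ``See Theorem 1, Theorem 2, Proposition 2 and Theorem 4 in \cite{Zu1}.'' So the only meaningful comparison is with that reference, whose overall architecture (Fourier identities for (ii), (iv), (v); a shell decomposition for (vi), with (iii) as a corollary; positivity of the symbol's inverse Fourier transform for (i); semigroup and Markov machinery for (vii)) your outline does follow, and your treatment of the soft parts is fine. The difficulty is that the two items you yourself label ``genuine obstacles'' are precisely the content of the theorem, and they are left unproved; worse, for one of them the fallback you propose is circular.

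For (i), your layer-cake reduction correctly shows that it suffices to prove $\mathcal{F}^{-1}\bigl(\mathbf{1}_{\{|a(\xi)|_{p}\le p^{m}\}}\bigr)\ge 0$ for each $m$, but you offer no argument for this, and it does not follow from ellipticity: comparability of $|a(\xi)|_{p}$ with $\|\xi\|_{p}^{d}$ does not make the level sets balls (or subgroups), as you note. The parenthetical ``alternatively, one invokes the $p$-adic Bochner--Schwartz theorem in the same role'' cannot close this gap: Theorem \ref{Thm1C} converts positive-definiteness of a distribution into positivity of a measure, so to apply it you would first need to know that $e^{-t|a(\xi)|_{p}^{\beta}}$ is positive-definite, which is exactly equivalent to $\Gamma(t,\cdot)\ge 0$. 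What is required (and what \cite{Zu1} actually supplies) is a direct analysis of the sets $\{|a(\xi)|_{p}\le p^{m}\}$ exploiting the $\mathbb{Z}_{p}$-integrality and homogeneity of $a$. For (vi), the estimate as literally described --- bound $e^{-t|a(\xi)|_{p}^{\beta}}$ above by $e^{-C_{0}^{\beta}tp^{jd\beta}}$ on each shell and sum over $p^{j}\lesssim 1/\|x\|_{p}$ --- only yields $\Gamma(t,x)\lesssim\|x\|_{p}^{-N}$ in the regime $\|x\|_{p}\gg t^{1/(d\beta)}$ and misses the essential factor $t\|x\|_{p}^{-d\beta}$: once you replace the symbol by a shell-wise upper bound you destroy the cancellation between the positive shells and the negative top term that you correctly identified. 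The sharp bound requires keeping differences of symbol values (equivalently, subtracting $1$ and using $|e^{-u}-1|\le u$ together with $\int_{B_{M}^{N}}\chi_{p}(\xi\cdot x)\,d^{N}\xi=0$ for $p^{M}\|x\|_{p}>1$), and, since $e^{-t|a(\xi)|_{p}^{\beta}}$ is not constant on spheres for non-radial $a$, it also requires its local constancy on cosets of $B_{j-k}^{N}$ inside $\{\|\xi\|_{p}=p^{j}\}$, with $k$ determined by $C_{0}$, before any character integral can be ``evaluated explicitly.'' These are genuine missing steps, not routine verifications, so the proposal as written does not yet establish (i), (iii) or (vi).
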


\begin{proof}
See Theorem 1, Theorem 2, Proposition 2 and Theorem 4 in \cite{Zu1}.
\end{proof}

The $p$-adic heat equation in dimension one was introduced in the book of
Vladimirov, Volovich and Zelenov \cite[Section \ XVI]{V-V-Z}. In
\cite[Chapters 4, 5]{Koch} Kochubei presented a general theory for
one-dimensional parabolic-type pseudodifferential equations with variable
coefficients, whose fundamental solutions are transition density functions for
Markov processes in the $p$-adic line. For a generalization of this theory see
\cite{Ch-Zuniga-2}. In \cite{Zu1} the author introduced the elliptic operators
presented before and studied the corresponding $n$-dimensional heat equations
and the associated Markov processes.

\section{\label{Sect4}Positive-definite distributions and the Bochner-Schwartz
theorem}

In this section, we establish a $p$-adic version of the\ Bochner-Schwartz
Theorem on positive-definite distributions following to Gel'fand and Vilenkin
\cite[Chapter II]{Gel-Vil}. 

\subsection{The $p$-adic Bochner-Schwartz theorem}

Along this section we work with complex-valued test functions. A distribution
$F\in\mathcal{D}^{\prime}\left(  \mathbb{Q}_{p}^{N}\right)  $ is called
\textit{positive}, if $\left(  F,\varphi\right)  \geq0$ for every positive
test function $\varphi$, i.e. if $\varphi\left(  x\right)  \geq0$ for every
$x$. In this case we will use the notation $F\geq0$. We say that $F$ is
\textit{multiplicatively positive}, if $\left(  F,\varphi\overline{\varphi
}\right)  \geq0$ for every test function $\varphi$, where $\overline{\varphi}$
denotes the complex conjugate of $\varphi$. A distribution $F$ is
positive-definite, if for every test function $\varphi$, the inequality
$\left(  F,\overline{\varphi\ast\widetilde{\varphi}}\right)  \geq0$ holds,
where $\widetilde{\varphi}\left(  x\right)  =\overline{\varphi\left(
-x\right)  }$.

\begin{theorem}
[$p$-adic Bochner-Schwartz Theorem]\label{Thm1C}Every positive-definite
distribution $F$ on $\mathbb{Q}_{p}^{N}$ is the Fourier transform of a regular
Borel measure $\mu$ on $\mathbb{Q}_{p}^{N}$, i.e.
\[
\left(  F,\varphi\right)  =%
{\displaystyle\int\nolimits_{\mathbb{Q}_{p}^{N}}}
\widehat{\varphi}\left(  \xi\right)  d\mu\left(  \xi\right)  \text{ for
}\varphi\in\mathcal{D}\left(  \mathbb{Q}_{p}^{N}\right)  .
\]
Conversely, the Fourier transform of any regular Borel measure gives rise to a
positive-definite distribution on $\mathbb{Q}_{p}^{N}$.
\end{theorem}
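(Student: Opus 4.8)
The plan is to follow Gel'fand and Vilenkin \cite[Chapter II]{Gel-Vil}, exploiting the special feature of the $p$-adic setting that $\mathcal{F}$ maps $\mathcal{D}(\mathbb{Q}_{p}^{N})$ isomorphically onto itself, so that the Fourier transform of a test function is again compactly supported. Since $\mathcal{F}$ is likewise an isomorphism of $\mathcal{D}^{\prime}(\mathbb{Q}_{p}^{N})$, I would set $\mu:=\mathcal{F}^{-1}[F]$, the unique distribution with $\mathcal{F}[\mu]=F$. By the definition of the distributional Fourier transform this already yields $(F,\varphi)=(\mu,\widehat{\varphi})$ for every $\varphi\in\mathcal{D}$, so the theorem reduces to showing that $\mu$ is a positive regular Borel measure: then $(\mu,\widehat{\varphi})=\int\widehat{\varphi}\,d\mu$ and the displayed identity follows. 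Observe that the integral is automatically finite because $\widehat{\varphi}\in\mathcal{D}$ has compact support, so \emph{no} temperedness or growth hypothesis on $\mu$ is required, in contrast with the Archimedean case.

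The core computation translates positive-definiteness of $F$ into positivity of $\mu$. Writing $\psi=\varphi\ast\widetilde{\varphi}$ and using $\widehat{\widetilde{\varphi}}=\overline{\widehat{\varphi}}$ together with the convolution rule $\widehat{\varphi\ast\widetilde{\varphi}}=|\widehat{\varphi}|^{2}$, a short calculation gives $\widehat{\overline{\psi}}(\xi)=|\widehat{\varphi}(-\xi)|^{2}\geq0$. Hence $(\mu,|\widehat{\varphi}(-\cdot)|^{2})=(\mu,\widehat{\overline{\psi}})=(F,\overline{\varphi\ast\widetilde{\varphi}})\geq0$ by hypothesis. It remains to note that \emph{every} non-negative test function $\Phi\geq0$ arises this way: since $\Phi$ is locally constant with compact support, so is $h:=\sqrt{\Phi(-\cdot)}$, and taking $\varphi:=\mathcal{F}^{-1}[h]\in\mathcal{D}$ gives $|\widehat{\varphi}(-\cdot)|^{2}=\Phi$. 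This is the one point where the non-Archimedean geometry genuinely helps, as square roots of non-negative locally constant functions are again test functions. Therefore $(\mu,\Phi)\geq0$ for all $\Phi\in\mathcal{D}$ with $\Phi\geq0$, i.e. $\mu$ is a positive distribution.

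The main obstacle is the final Riesz-type step: upgrading the positive \emph{distribution} $\mu$ to a positive regular Borel \emph{measure}. Here I would use that every ball $B$ is both compact and open, so $\mathbf{1}_{B}\in\mathcal{D}$, and that for real $\Phi\in\mathcal{D}$ supported in $B$ one has $-\|\Phi\|_{\infty}\mathbf{1}_{B}\leq\Phi\leq\|\Phi\|_{\infty}\mathbf{1}_{B}$; positivity then yields the domination $|(\mu,\Phi)|\leq\|\Phi\|_{\infty}\,(\mu,\mathbf{1}_{B})$. Since locally constant functions are uniformly dense in $C_{c}(\mathbb{Q}_{p}^{N})$, this bound extends $\mu$ to a positive linear functional on $C_{c}(\mathbb{Q}_{p}^{N})$, and the Riesz--Markov theorem produces a positive Radon measure $\mu$ representing it, with regularity automatic on the Polish space $\mathbb{Q}_{p}^{N}$. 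This gives the direct implication. For the converse I would run the computation backwards: given a regular Borel measure $\mu$, define $(F,\varphi):=\int\widehat{\varphi}\,d\mu$ (finite since $\widehat{\varphi}\in\mathcal{D}$), note that this is a well-defined linear functional and hence, by the automatic continuity of functionals on $\mathcal{D}$, a distribution, and finally compute $(F,\overline{\varphi\ast\widetilde{\varphi}})=\int|\widehat{\varphi}(-\xi)|^{2}\,d\mu(\xi)\geq0$, so that $F$ is positive-definite.
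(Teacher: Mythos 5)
Your proposal is correct and follows essentially the same route as the paper: identify $\mu=\mathcal{F}^{-1}[F]$, show that positive-definiteness of $F$ forces $\mu$ to be a positive distribution, and invoke the Riesz--Markov theorem to realize $\mu$ as a regular Borel measure, with the converse being the same computation run backwards. The only difference is presentational: where the paper factors the argument through the intermediate notion of multiplicatively positive distributions and merely asserts its equivalence with positivity, you make that step explicit via the observation that $\sqrt{\Phi(-\cdot)}$ of a non-negative test function is again a test function, which is a worthwhile clarification.
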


\begin{proof}
($\Rightarrow$) By the Riesz-Markov-Kakutani Theorem every positive
distribution $F$ on $\mathbb{Q}_{p}^{N}$ has the form%
\[
\left(  F,\phi\right)  =%
{\displaystyle\int\nolimits_{\mathbb{Q}_{p}^{N}}}
\phi\left(  \xi\right)  d\mu\left(  \xi\right)  \text{ for }\phi\in
\mathcal{D}\left(  \mathbb{Q}_{p}^{N}\right)  ,
\]
where $\mu$ is a regular Borel measure. Conversely, \ every regular Borel
measure $\mu$ defines a positive linear functional on $\mathcal{D}\left(
\mathbb{Q}_{p}^{N}\right)  $. On the other hand, since $F$ is a
multiplicatively positive distribution if and only if $F$ is a positive
distribution, we can replace positive by multiplicatively positive in the
above assertion. We now note that the Fourier transform carries
positive-definite distributions into multiplicatively positive distributions,
and every multiplicatively positive distribution can be obtained in this
manner. Indeed,%
\begin{align*}
\left(  \widehat{F},\widehat{\varphi}\overline{\widehat{\varphi}}\right)   &
=\left(  \widehat{F},\widehat{\varphi\ast\widetilde{\varphi}}\right) \\
&  =\left(  F,\widehat{\widehat{\varphi\ast\widetilde{\varphi}}}\right)
=\left(  F\left(  \xi\right)  ,\left(  \varphi\ast\widetilde{\varphi}\right)
\left(  -\xi\right)  \right) \\
&  =\left(  F,\overline{\varphi\ast\widetilde{\varphi}}\right)  ,
\end{align*}
since $\overline{\left(  \varphi\ast\widetilde{\varphi}\right)  \left(
-\xi\right)  }$ $=\left(  \varphi\ast\widetilde{\varphi}\right)  \left(
\xi\right)  $. Now, let $F\in\mathcal{D}^{\prime}\left(  \mathbb{Q}_{p}%
^{N}\right)  $ be a multiplicatively positive distribution, i.e. $\left(
F,\psi\overline{\psi}\right)  \geq0$ for every $\psi\in\mathcal{D}\left(
\mathbb{Q}_{p}^{N}\right)  $. Then, there exist a distribution $T$ and a test
function $\phi$ satisfying $\widehat{T}=F$ and $\psi=\widehat{\phi}$, because
the Fourier transform is an isomorphism on $\mathcal{D}^{\prime}\left(
\mathbb{Q}_{p}^{N}\right)  $ and on $\mathcal{D}\left(  \mathbb{Q}_{p}%
^{N}\right)  $. From this observation we have $\left(  F,\psi\overline{\psi
}\right)  =\left(  T,\overline{\varphi\ast\widetilde{\varphi}}\right)  \geq0$.

($\Leftarrow$) It follows from this calculation:%
\begin{align*}%
{\displaystyle\int\nolimits_{\mathbb{Q}_{p}^{N}}}
\mathcal{F}\left(  \overline{\left(  \varphi\ast\widetilde{\varphi}\right)
}\right)  \left(  \xi\right)  d\mu\left(  \xi\right)   &  =%
{\displaystyle\int\nolimits_{\mathbb{Q}_{p}^{N}}}
\left(  \mathcal{F}^{-1}\varphi\right)  \left(  \xi\right)  \overline{\left(
\mathcal{F}^{-1}\varphi\right)  }\left(  \xi\right)  d\mu\left(  \xi\right) \\
&  =%
{\displaystyle\int\nolimits_{\mathbb{Q}_{p}^{N}}}
\left\vert \left(  \mathcal{F}^{-1}\varphi\right)  \left(  \xi\right)
\right\vert ^{2}d\mu\left(  \xi\right)  \geq0.
\end{align*}

\end{proof}

\subsection{Positive-definite functions}

We recall \ that a continuous function $g:\mathbb{Q}_{p}^{N}\rightarrow
\mathbb{C}$ is positive-definite, if for any $p$-adic numbers $x_{1}%
,\ldots,x_{m}$ and any complex numbers $\sigma_{1},\ldots,\sigma_{m}$, it
verifies that $\sum_{j}\sum_{i}g\left(  x_{j}-x_{i}\right)  \sigma
_{j}\overline{\sigma}_{i}\geq0$. Such function $g$ satisfies the following:
$\overline{g}$ is positive-definite, $g(-x)=\overline{g}(x)$, $g(0)\geq0$, and
$\left\vert g(x)\right\vert \leq g(0)$. We associate to $g$ the distribution
$\int_{\mathbb{Q}_{p}^{N}}g(x)\varphi\left(  x\right)  d^{N}x$, while
Gel'fand-Vilenkin attach to $g$ the distribution $\int_{\mathbb{Q}_{p}^{N}%
}\overline{g}(x)\varphi\left(  x\right)  d^{N}x$, for this reason our
definition of positive-definite distribution is slightly different, but
equivalent to the one given in \cite[Chapter II]{Gel-Vil}.\ Finally, we recall
that $g$ satisfies $\left(  g,\overline{\varphi\ast\widetilde{\varphi}%
}\right)  \geq0$ for any test function $\varphi$, i.e. $g$ generates a
positive-definite distribution, see e.g. \cite[Proposition 4.1]{Berg-Forst}.

\section{\label{Sect5}Stochastic integrals and Gaussian noise}

In this section we introduce the stochastic integration with respect to a
spatially homogeneous Gaussian noise. Our exposition has been strongly
influenced by \cite{Dalang-Quer}. There are two distinct approaches (or
schools) of study for stochastic partial differential equations, based on
different theories of stochastic integration: the Walsh theory \cite{Walsh},
which uses integration with respect to worthy martingale measures, and a
theory of integration with respect to Hilbert-space valued processes \cite{Da
Prato}. In \cite{Dalang-Quer} the authors discuss the connections between
these theories. In this article we use Hilbert-space approach. In this section
we present the non-Archimedean counterpart of this theory.

\subsection{\label{StochasticInt}Stochastic integrals with respect to a
spatially homogeneous Gaussian noise}

Let $V$ be a separable Hilbert space with inner product $\left\langle
\cdot,\cdot\right\rangle _{V}$. Following \cite{Dalang-Quer} and the
references therein, we define the general notion of cylindrical Wiener process
\ in $V$ as follows:

\begin{definition}
\label{Def1}Let $Q$ be a symmetric and non-negative definite bounded linear
operator \ on $V$. A family of random variables $B=\left\{  B_{t}\left(
h\right)  ,t\geq0,h\in V\right\}  $ is a cylindrical Wiener process if the
following conditions hold:

\noindent(i) for any $h\in V$, $\left\{  B_{t}\left(  h\right)  ,t\geq
0\right\}  $ defines a Brownian motion with variance $t\left\langle
Qh,h\right\rangle _{V}$;

\noindent(ii) for all $s$, $t\in\mathbb{R}_{+}$ and $h$, $g\in V$,
\[
E\left(  B_{s}\left(  h\right)  B_{t}\left(  g\right)  \right)  =\left(
s\wedge t\right)  \left\langle Qh,g\right\rangle _{V},
\]
where $s\wedge t:=\min\left\{  s,t\right\}  $. If $Q=I_{V}$ is the identity
operator in $V$, then $B$ will be called a standard cylindrical Wiener
process. We will refer to $Q$ as the covariance of $B$.
\end{definition}

Let $\mathcal{F}_{t}$ be the $\sigma$-field generated by the random variables
$\left\{  B_{s}\left(  h\right)  ,h\in V,0\leq s\leq t\right\}  $ and the
$P$-null sets. We define the \textit{predictable }$\sigma$\textit{-field in
}$\left[  0,T\right]  \times\Omega$ generated by the sets%
\[
\left\{  \left(  s,t\right]  \times A,A\in\mathcal{F}_{s}\text{, }0\leq
s<t\leq T\right\}  .
\]

We denote by $V_{Q}$ the completion of the Hilbert space $V$ endowed with the
inner semi-product%
\[
\left\langle h,g\right\rangle _{V_{Q}}:=\left\langle Qh,g\right\rangle
_{V}\text{, \ }h\text{,}g\in V.
\]

We define the stochastic integral of any predictable square-integrable process
with values in $V_{Q}$ as follows. Let $\left(  v_{j}\right)  _{j}$ be a
complete orthonormal basis of $V_{Q}$. For any predictable process $g\in
L^{2}\left(  \Omega\times\left[  0,T\right]  ;V_{Q}\right)  $, the following
series converges in $L^{2}\left(  \Omega,\mathcal{F},P\right)  $ and the sum
does not depend on \ the chosen orthonormal basis:%
\begin{equation}
g\cdot B:=%
{\displaystyle\sum\limits_{j=1}^{+\infty}}
{\displaystyle\int\nolimits_{0}^{T}}
\left\langle g_{s},v_{j}\right\rangle _{V_{Q}}dB_{s}\left(  v_{j}\right)  .
\label{Eq1}%
\end{equation}
We note that each summand in the above series is a classical Itô integral with
respect to a standard Brownian motion, and the resulting stochastic integral
is a real-valued random variable. The stochastic integral $g\cdot B$ is also
denoted by $\int_{0}^{T}g_{s}dB_{s}$. The independence of each of terms in
series (\ref{Eq1}) leads to the isometry property%
\[
E\left(  \left(  g\cdot B\right)  ^{2}\right)  =E\left(  \left(  \int_{0}%
^{T}g_{s}dB_{s}\right)  ^{2}\right)  =E\left(  \left(  \int_{0}^{T}\left\Vert
g_{s}\right\Vert _{V_{Q}}ds\right)  \right)  .
\]

\subsection{\label{SectionNoise}Spatially homogeneous Gaussian noise}

Let $\left(  \Omega,\mathcal{F},P\right)  $ be a complete probability space.
We denote by $\mathcal{I}(\mathbb{R})$ the $\mathbb{R}$-vector space of
functions of the form $\sum_{k=1}^{m}c_{k}1_{I_{k}}\left(  x\right)  $ where
$c_{1},\ldots,c_{k}$ are real numbers and each $I_{k}$ is a bounded interval
(open, closed, half-open). It is well-known\ that $\mathcal{I}(\mathbb{R})$ is
dense in $L^{\rho}(\mathbb{R})$ for $1\leq\rho<\infty$.

We denote by $\mathcal{I}(\mathbb{R})\otimes_{\text{alg}}\mathcal{D}%
_{\mathbb{R}}(\mathbb{Q}_{p}^{N})$ the algebraic tensor product of the
$\mathbb{R}$-vector spaces $\mathcal{I}(\mathbb{R})$ and $\mathcal{D}%
_{\mathbb{R}}(\mathbb{Q}_{p}^{N})$. Notice that $\mathcal{I}(\mathbb{R}%
)\otimes_{\text{alg}}\mathcal{D}_{\mathbb{R}}(\mathbb{Q}_{p}^{N})$ is the
$\mathbb{R}$-vector space spanned by $\sum_{j\in J}c_{j}\left(  t\right)
\Omega\left(  p^{m}\left\Vert x-\widetilde{x}_{j}\right\Vert _{p}\right)  $,
where $c_{j}\left(  t\right)  \in\mathcal{I}(\mathbb{R})$, $m\in\mathbb{Z}$,
and $J$ is a finite set.

\ On $\left(  \Omega,\mathcal{F},P\right)  $, we consider a family of mean
zero Gaussian random variables
\begin{equation}
\left\{  W\left(  \varphi\right)  ,\varphi\in\mathcal{I}(\mathbb{R}%
)\otimes_{\text{alg}}\mathcal{D}_{\mathbb{R}}(\mathbb{Q}_{p}^{N})\right\}
\label{Eq3}%
\end{equation}
with covariance%
\begin{align}
E\left(  W\left(  \varphi\right)  W\left(  \psi\right)  \right)   &  =%
{\displaystyle\int\nolimits_{0}^{+\infty}}
{\displaystyle\int\nolimits_{\mathbb{Q}_{p}^{N}}}
{\displaystyle\int\nolimits_{\mathbb{Q}_{p}^{N}}}
\varphi(t,x)f(x-y)\psi(t,y)d^{N}xd^{N}ydt\nonumber\\
&  =%
{\displaystyle\int\nolimits_{0}^{+\infty}}
{\displaystyle\int\nolimits_{\mathbb{Q}_{p}^{N}}}
f(z)\left(  \varphi(t)\ast\widetilde{\psi}(t)\right)  \left(  z\right)
d^{N}zdt, \label{Eq2}%
\end{align}
where $\widetilde{\psi}(t)\left(  z\right)  =\psi(t,-z)$ and $f$ is
non-negative continuous function on $\mathbb{Q}_{p}^{N}\smallsetminus\left\{
0\right\}  $. This function induces a positive distribution on $\mathbb{Q}%
_{p}^{N}$ and then $f$ is the Fourier transform of a regular Borel measure
$\mu$ on $\mathbb{Q}_{p}^{N}$, see Theorem \ref{Thm1C}. This measure is
\textit{called the spectral measure} of $W$. In this \ case%
\[
E\left(  W\left(  \varphi\right)  W\left(  \psi\right)  \right)  =%
{\displaystyle\int\nolimits_{0}^{+\infty}}
{\displaystyle\int\nolimits_{\mathbb{Q}_{p}^{N}}}
\mathcal{F}\varphi(t)\left(  \xi\right)  \overline{\mathcal{F}\psi(t)\left(
\xi\right)  }d\mu\left(  \xi\right)  dt.
\]

\subsubsection{Some examples of kernels}

The basic example of kernel function is the white noise kernel: $f(x)=\delta
\left(  x\right)  $, $d\mu\left(  \xi\right)  =d^{N}\xi$. Here are some
typical examples:

\begin{example}
If $d\mu\left(  \xi\right)  =\left\Vert \xi\right\Vert _{p}^{-\alpha}d^{N}\xi
$, $0<\alpha<N$, then $f(x)=R_{\alpha}(x)=\frac{1-p^{-\alpha}}{1-p^{\alpha-N}%
}\left\Vert x\right\Vert _{p}^{\alpha-n}$, the Riesz kernel, see e.g.
\cite[Chapter III, Section 4]{Taibleson}.
\end{example}

\begin{example}
If $d\mu\left(  \xi\right)  =e^{-\left\Vert \xi\right\Vert _{p}^{\beta}}$,
$\beta>0$, then $f\left(  x\right)  =\mathcal{F}_{\xi\rightarrow x}\left(
e^{-\left\Vert \xi\right\Vert _{p}^{\beta}}\right)  $ is the $p$-adic heat
kernel. Notice that we can replace $\left\Vert \xi\right\Vert _{p}^{\beta}$ by
$\left\vert a\left(  \xi\right)  \right\vert _{p}^{\beta}$, where $a\left(
\xi\right)  $ is an elliptic polynomial.
\end{example}

Before presenting our next example, we recall the following result:

\begin{lemma}
[{\cite[Lemma 5.2]{Taibleson}}]\label{lemma0}Suppose that $\alpha>0$. Define%
\[
K_{\alpha}\left(  x\right)  =\left\{
\begin{array}
[c]{lll}%
\frac{1-p^{-\alpha}}{1-p^{\alpha-N}}\left(  \left\Vert x\right\Vert
_{p}^{\alpha-N}-p^{\alpha-N}\right)  \Omega\left(  \left\Vert x\right\Vert
_{p}\right)  & \text{if} & \alpha\neq N\\
&  & \\
\left(  1-p^{-N}\right)  \log_{p}\left(  \frac{p}{\left\Vert x\right\Vert
_{p}}\right)  \Omega\left(  \left\Vert x\right\Vert _{p}\right)  & \text{if} &
\alpha=N.
\end{array}
\right.
\]
Then $K_{\alpha}\in L^{1}$ and $\mathcal{F}K_{\alpha}\left(  \xi\right)
=\max\left(  1,\left\Vert \xi\right\Vert _{p}\right)  ^{-\alpha}$.
\end{lemma}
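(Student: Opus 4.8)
The plan is to compute $\mathcal{F}K_{\alpha}$ directly, exploiting that $K_{\alpha}$ is radial and supported on $B_{0}^{N}=\mathbb{Z}_{p}^{N}$ (because of the factor $\Omega(\|x\|_{p})$). First I would decompose the unit ball into spheres (shells) $S_{j}:=\{x\in\mathbb{Q}_{p}^{N}:\|x\|_{p}=p^{-j}\}=B_{-j}^{N}\setminus B_{-j-1}^{N}$, $j\geq 0$, for which $\mathrm{vol}(S_{j})=p^{-jN}(1-p^{-N})$, and note that $K_{\alpha}$ is constant on each $S_{j}$: up to the normalizing constant $\frac{1-p^{-\alpha}}{1-p^{\alpha-N}}$ its value on $S_{j}$ is $p^{-j(\alpha-N)}-p^{\alpha-N}$ when $\alpha\neq N$, and up to $1-p^{-N}$ its value is $j+1$ when $\alpha=N$ (since $\log_{p}(p/\|x\|_{p})=j+1$ on $S_{j}$). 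For the $L^{1}$ claim I would sum $|K_{\alpha}|$ against $\mathrm{vol}(S_{j})$: the products $p^{-j(\alpha-N)}\cdot p^{-jN}=p^{-j\alpha}$ and $(j+1)p^{-jN}$ generate a convergent geometric series and a convergent arithmetic-geometric series for every $\alpha>0$, giving $K_{\alpha}\in L^{1}$ and, by dominated convergence, justifying the term-by-term integration used below.

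For the transform I would use the standard character orthogonality relation $\int_{B_{-j}^{N}}\chi_{p}(-\xi\cdot x)\,d^{N}x=p^{-jN}\,\mathbf{1}\{\|\xi\|_{p}\leq p^{j}\}$, which factorizes over coordinates. Telescoping gives the sphere integral $\int_{S_{j}}\chi_{p}(-\xi\cdot x)\,d^{N}x=p^{-jN}\mathbf{1}\{\|\xi\|_{p}\leq p^{j}\}-p^{-(j+1)N}\mathbf{1}\{\|\xi\|_{p}\leq p^{j+1}\}$. Writing $\|\xi\|_{p}=p^{m}$, one distinguishes two regimes: for $\|\xi\|_{p}\leq 1$ every sphere contributes its full measure $p^{-jN}(1-p^{-N})$, whereas for $m\geq 1$ only the spheres $j\geq m$ contribute $p^{-jN}(1-p^{-N})$, the boundary sphere $j=m-1$ contributes $-p^{-mN}$, and all inner spheres vanish. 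Plugging these into $\mathcal{F}K_{\alpha}(\xi)=\sum_{j\geq 0}K_{\alpha}|_{S_{j}}\int_{S_{j}}\chi_{p}(-\xi\cdot x)\,d^{N}x$ and summing in each case should collapse (after the algebra) to $1$ when $\|\xi\|_{p}\leq 1$ and to $p^{-m\alpha}=\|\xi\|_{p}^{-\alpha}$ when $\|\xi\|_{p}=p^{m}>1$, which is exactly $\max(1,\|\xi\|_{p})^{-\alpha}$.

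The main obstacle is the bookkeeping in this algebraic collapse. In the regular case $\alpha\neq N$ one must check that the contribution of the constant correction $-p^{\alpha-N}$ (whose transform is $-p^{\alpha-N}\Omega(\|\xi\|_{p})$) combines with the geometric sum of the $p^{-j(\alpha-N)}$ terms and the negative boundary contribution to produce precisely $1$ for $\|\xi\|_{p}\leq 1$; this forces the specific value of the normalizing constant $\frac{1-p^{-\alpha}}{1-p^{\alpha-N}}$. In the critical case $\alpha=N$ the same cancellation must instead be extracted from the arithmetic-geometric sum $\sum_{j\geq m}(j+1)p^{-jN}$, evaluated by an index shift $j=m+k$, and then balanced against the $-m\,p^{-mN}$ boundary term; the fact that these two contributions recombine cleanly into $p^{-mN}$ is the delicate point, and it is where the $\alpha=N$ formula (with the logarithm replacing the singular $\frac{1}{1-p^{\alpha-N}}$) is seen to be the correct limiting expression. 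Carrying out both regimes with care yields the claimed identity $\mathcal{F}K_{\alpha}(\xi)=\max(1,\|\xi\|_{p})^{-\alpha}$.
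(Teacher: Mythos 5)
Your proposal is correct. Note that the paper itself offers no proof of this lemma: it is quoted verbatim from Taibleson's book (Lemma 5.2 there), so the only comparison available is with that standard source, and your shell decomposition is in fact the classical argument used there for radial functions. I checked the two points you flag as delicate and both collapse as claimed: for $\alpha\neq N$ and $\Vert\xi\Vert_{p}\leq 1$ the two geometric series combine to $\frac{1-p^{-N}}{1-p^{-\alpha}}-p^{\alpha-N}=\frac{1-p^{\alpha-N}}{1-p^{-\alpha}}$, which is exactly the reciprocal of the normalizing constant, giving $1$; for $\Vert\xi\Vert_{p}=p^{m}$, $m\geq1$, the boundary term $-p^{-mN}p^{-(m-1)(\alpha-N)}=-p^{\alpha-N}p^{-m\alpha}$ cancels against the constant-correction piece of the tail sum and leaves $p^{-m\alpha}$ times the same reciprocal factor. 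In the critical case $\alpha=N$ the identity $(1-x)\sum_{j\geq m}(j+1)x^{j}-mx^{m}=\frac{x^{m}}{1-x}$ with $x=p^{-N}$ does the corresponding work. The $L^{1}$ claim via $\sum_{j}p^{-j\alpha}$ and $\sum_{j}(j+1)p^{-jN}$ is also fine, and it legitimately justifies the term-by-term integration. The only cosmetic caveat is that the character-orthogonality formula you invoke should be stated with the correct normalization $\mathrm{vol}(B_{0}^{N})=1$ as in the paper, which is what you use; with that, the argument is complete.
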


The distribution $K_{\alpha}$ is called\textit{ the Bessel potential of order
}$\alpha$, see e.g. \cite[Chapter III, Section 5]{Taibleson}.

\begin{example}
If $d\mu\left(  \xi\right)  =\max\left(  1,\left\Vert \xi\right\Vert
_{p}\right)  ^{-\alpha}$, $\alpha>0$, then $f(x)=K_{\alpha}\left(  x\right)
$, the Bessel potential of order $\alpha$.
\end{example}

\subsubsection{A Cylindrical Wiener process associated with $W$}

Let $U$ be \ the completion of the Bruhat-Schwartz space $\mathcal{D}%
_{\mathbb{R}}(\mathbb{Q}_{p}^{N})$ endowed with semi-inner product%
\[
\left\langle \varphi,\psi\right\rangle _{U}:=%
{\displaystyle\int\nolimits_{\mathbb{Q}_{p}^{N}}}
\mathcal{F}\varphi\left(  \xi\right)  \overline{\mathcal{F}\psi\left(
\xi\right)  }d\mu\left(  \xi\right)  ,\text{ \ }\varphi,\psi\in\mathcal{D}%
_{\mathbb{R}}(\mathbb{Q}_{p}^{N}),
\]
where $\mu$ is the spectral measure of $W$. We denote by $\left\Vert
\cdot\right\Vert _{U}$ the corresponding norm. Then $U$ is a separable Hilbert
space (because $\mathcal{D}_{\mathbb{R}}(\mathbb{Q}_{p}^{N})$\ is separable)
that may contain distributions.

We fix \ a time interval $\left[  0,T\right]  $ and set $U_{T}:=L^{2}\left(
\left[  0,T\right]  ;U\right)  $. This set is equipped with the norm given by
\[
\left\Vert g\right\Vert _{U_{T}}^{2}:=%
{\displaystyle\int\nolimits_{0}^{T}}
\left\Vert g\left(  s\right)  \right\Vert _{U}^{2}ds.
\]
We now associate a cylindrical Wiener process to $W$ as follows. A direct
calculation using (\ref{Eq2}) shows that the generalized Gaussian random field
$W\left(  \varphi\right)  $ is a random linear functional, in the sense that
$W\left(  a\varphi+b\psi\right)  =aW\left(  \varphi\right)  +bW\left(
\psi\right)  $, almost surely, and $\varphi\rightarrow W\left(  \varphi
\right)  $ is an isometry from $\left(  \mathcal{I}(\left[  0,T\right]
)\otimes_{\text{alg}}\mathcal{D}_{\mathbb{R}}(\mathbb{Q}_{p}^{N}),\left\Vert
\cdot\right\Vert _{U_{T}}\right)  $ into $L^{2}\left(  \Omega,\mathcal{F}%
,P\right)  $. The following lemma identifies the completion of $\mathcal{I}%
(\left[  0,T\right]  )\otimes_{\text{alg}}\mathcal{D}_{\mathbb{R}}%
(\mathbb{Q}_{p}^{N})$ with respect to $\left\Vert \cdot\right\Vert _{U_{T}}$.

\begin{lemma}
\label{lemma1}The space $\mathcal{I}(\left[  0,T\right]  )\otimes_{\text{alg}%
}\mathcal{D}_{\mathbb{R}}(\mathbb{Q}_{p}^{N})$ is dense in $U_{T}=L^{2}\left(
\left[  0,T\right]  ;U\right)  $ for $\left\Vert \cdot\right\Vert _{U_{T}}$.
\end{lemma}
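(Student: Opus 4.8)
The plan is to prove the density by the standard two-stage approximation for Hilbert-space-valued $L^{2}$-functions: first approximate an arbitrary $g\in U_{T}$ by a $U$-valued \emph{simple} function (constant, in the $U$-variable, on a finite family of time sets), and then replace both the $U$-values and the time sets by test functions in $\mathcal{D}_{\mathbb{R}}(\mathbb{Q}_{p}^{N})$ and by step functions in $\mathcal{I}([0,T])$, respectively. The product $c(t)\varphi(x)$ of a step function and a test function is exactly a generator of $\mathcal{I}([0,T])\otimes_{\text{alg}}\mathcal{D}_{\mathbb{R}}(\mathbb{Q}_{p}^{N})$, so such a procedure produces approximants in the desired space.

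First I would fix $g\in U_{T}=L^{2}([0,T];U)$ and $\varepsilon>0$. Since $U$ is a separable Hilbert space (as noted, because $\mathcal{D}_{\mathbb{R}}(\mathbb{Q}_{p}^{N})$ is separable), the classical Bochner-space density theorem gives a simple function $g_{1}=\sum_{k=1}^{n}1_{A_{k}}(t)\,h_{k}$, with $A_{k}\subset[0,T]$ Borel and $h_{k}\in U$, such that $\Vert g-g_{1}\Vert_{U_{T}}<\varepsilon/3$. Next, because $U$ is the completion of $\mathcal{D}_{\mathbb{R}}(\mathbb{Q}_{p}^{N})$ for $\Vert\cdot\Vert_{U}$, the image of $\mathcal{D}_{\mathbb{R}}(\mathbb{Q}_{p}^{N})$ is dense in $U$; hence I may choose $\varphi_{k}\in\mathcal{D}_{\mathbb{R}}(\mathbb{Q}_{p}^{N})$ with $\Vert h_{k}-\varphi_{k}\Vert_{U}$ arbitrarily small, and set $g_{2}=\sum_{k=1}^{n}1_{A_{k}}(t)\,\varphi_{k}$. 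By the triangle inequality in $U_{T}$, $\Vert g_{1}-g_{2}\Vert_{U_{T}}\leq\sum_{k=1}^{n}T^{1/2}\,\Vert h_{k}-\varphi_{k}\Vert_{U}$, which is $<\varepsilon/3$ for a suitable choice. Now the $\varphi_{k}$ are fixed (finitely many), so $M:=\max_{k}\Vert\varphi_{k}\Vert_{U}<\infty$; since $\mathcal{I}([0,T])$ is dense in $L^{2}([0,T])$, I pick step functions $c_{k}\in\mathcal{I}([0,T])$ with $\Vert 1_{A_{k}}-c_{k}\Vert_{L^{2}([0,T])}<\varepsilon/(3nM)$ and put $g_{3}=\sum_{k=1}^{n}c_{k}(t)\,\varphi_{k}$. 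Then $\Vert g_{2}-g_{3}\Vert_{U_{T}}\leq\sum_{k=1}^{n}\Vert 1_{A_{k}}-c_{k}\Vert_{L^{2}([0,T])}\,\Vert\varphi_{k}\Vert_{U}<\varepsilon/3$. Combining the three estimates gives $\Vert g-g_{3}\Vert_{U_{T}}<\varepsilon$, and $g_{3}\in\mathcal{I}([0,T])\otimes_{\text{alg}}\mathcal{D}_{\mathbb{R}}(\mathbb{Q}_{p}^{N})$, as required.

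The argument is almost entirely routine, and the only two points that deserve care are conceptual rather than computational. The first is that $\langle\cdot,\cdot\rangle_{U}$ is merely a \emph{semi}-inner product, so $U$ is obtained by quotienting $\mathcal{D}_{\mathbb{R}}(\mathbb{Q}_{p}^{N})$ by the null space of the seminorm and then completing; the possible degeneracy is harmless, and the image of $\mathcal{D}_{\mathbb{R}}(\mathbb{Q}_{p}^{N})$ is dense in $U$ \emph{by construction}, which is exactly what the second approximation step uses. The second is the correct ordering of the two approximations: I fix the finitely many $\varphi_{k}$ \emph{before} approximating the time sets, so that $M=\max_{k}\Vert\varphi_{k}\Vert_{U}$ is a finite constant available for the final estimate; this avoids any genuine cross-term difficulty. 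Equivalently, one may phrase the whole proof in one line by identifying $U_{T}=L^{2}([0,T];U)$ with the Hilbert tensor product $L^{2}([0,T])\,\widehat{\otimes}\,U$ and invoking the general fact that the algebraic tensor product of dense subspaces is dense in the completed tensor product; the density of $\mathcal{I}([0,T])$ in $L^{2}([0,T])$ and of $\mathcal{D}_{\mathbb{R}}(\mathbb{Q}_{p}^{N})$ in $U$ then finishes the proof. I regard the expanded $\varepsilon/3$ version above as the most self-contained formulation.
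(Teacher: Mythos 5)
Your proof is correct, but it organizes the reduction differently from the paper. You follow the classical Bochner-space route: first approximate $g\in L^{2}([0,T];U)$ by a simple function $\sum_{k}1_{A_{k}}(t)h_{k}$ with $h_{k}\in U$, then replace each $h_{k}$ by a test function (using that $\mathcal{D}_{\mathbb{R}}(\mathbb{Q}_{p}^{N})$ is dense in $U$ by construction) and each $1_{A_{k}}$ by an element of $\mathcal{I}([0,T])$, with the $\varepsilon/3$ bookkeeping done in the right order. The paper instead fixes a complete orthonormal basis $(e_{j})_{j}$ of $U$ consisting of test functions, expands $\varphi(s)=\sum_{j}\langle\varphi(s),e_{j}\rangle_{U}e_{j}$, truncates the series, and then only has to approximate the scalar time coefficients $s\mapsto\langle\varphi(s),e_{j}\rangle_{U}\in L^{2}([0,T];\mathbb{R})$ by step functions; so the spatial factor is a test function from the outset and only one further approximation (in time) is needed, at the mild cost of invoking the existence of an orthonormal basis of $U$ inside $\mathcal{D}_{\mathbb{R}}(\mathbb{Q}_{p}^{N})$ (Gram--Schmidt on a countable dense subset). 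Your version requires the additional standard fact that simple functions are dense in $L^{2}([0,T];U)$, but avoids the basis; both arguments are elementary and complete, and your closing remark that the statement is just density of an algebraic tensor product of dense subspaces in $L^{2}([0,T])\,\widehat{\otimes}\,U$ is the cleanest conceptual summary of what both proofs are doing.
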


\begin{proof}
Let $\mathcal{C}$ denote the closure of $\mathcal{I}(\left[  0,T\right]
)\otimes_{\text{alg}}\mathcal{D}_{\mathbb{R}}(\mathbb{Q}_{p}^{N})$ in $U_{T}$
for $\left\Vert \cdot\right\Vert _{U_{T}}$. Suppose that we are given
$\varphi_{1}\in L^{2}\left(  \left[  0,T\right]  ;\mathbb{R}\right)  $ and
$\varphi_{2}\in\mathcal{D}_{\mathbb{R}}(\mathbb{Q}_{p}^{N})$. We show that
$\varphi_{1}\varphi_{2}\in\mathcal{C}$. Indeed, let $\left(  \varphi
_{1}^{\left(  n\right)  }\right)  _{n}\subset\mathcal{I}(\mathbb{R})$ such
that, for all n, the support of $\varphi_{1}^{\left(  n\right)  }$ is
contained in $\left[  0,T\right]  $ and $\varphi_{1}^{\left(  n\right)
}\rightarrow\varphi_{1}$ in $L^{2}\left(  \left[  0,T\right]  ;\mathbb{R}%
\right)  $. Now $\varphi_{1}^{\left(  n\right)  }\varphi_{2}\in\mathcal{I}%
(\left[  0,T\right]  )\otimes_{\text{alg}}\mathcal{D}_{\mathbb{R}}%
(\mathbb{Q}_{p}^{N})\subset\mathcal{C}$ and $\varphi_{1}^{\left(  n\right)
}\varphi_{2}$ $\underrightarrow{\left\Vert \cdot\right\Vert _{U_{T}}}$
$\varphi_{1}\varphi_{2}$, therefore $\varphi_{1}\varphi_{2}\in\mathcal{C}$.

Suppose that $\varphi\in U_{T}$. We show that $\varphi\in\mathcal{C}$. Indeed,
let $\left(  e_{j}\right)  _{j}$ be a complete orthonormal basis of $U$ with
$e_{j}\in\mathcal{D}_{\mathbb{R}}(\mathbb{Q}_{p}^{N})$ for all $j$. Then,
since $\varphi\left(  s\right)  \in U$ for any $s\in\left[  0,T\right]  $,%
\[
\left\Vert \varphi\right\Vert _{U_{T}}^{2}=%
{\displaystyle\int\nolimits_{0}^{T}}
\left\Vert \varphi\left(  s\right)  \right\Vert _{U}^{2}ds=%
{\displaystyle\sum\nolimits_{j=1}^{+\infty}}
{\displaystyle\int\nolimits_{0}^{T}}
\left\langle \varphi\left(  s\right)  ,e_{j}\right\rangle _{U}^{2}ds.
\]
We now note that for any $j\geq1$, the function $s\rightarrow\left\langle
\varphi\left(  s\right)  ,e_{j}\right\rangle _{U}$ belongs to $L^{2}\left(
\left[  0,T\right]  ;\mathbb{R}\right)  $. Thus, by the above considerations,%
\[
\varphi^{\left(  n\right)  }\left(  \cdot\right)  :=%
{\displaystyle\sum\nolimits_{j=1}^{n}}
\left\langle \varphi\left(  \cdot\right)  ,e_{j}\right\rangle _{U}e_{j}%
\in\mathcal{C}\text{.}%
\]
Finally, since \ $\lim_{n\rightarrow+\infty}\left\Vert \varphi-\varphi
^{\left(  n\right)  }\right\Vert _{U_{T}}^{2}=0$, we conclude that $\varphi
\in\mathcal{C}$.
\end{proof}

By using the above lemma, we can extend $W$ to $U_{T}$ following the standard
methods for extending an isometry. This establishes the following result.

\begin{proposition}
\label{Prop1}For $t\geq0$ and $\varphi\in U$, set $W_{t}\left(  \varphi
\right)  :=W\left(  1_{\left[  0,t\right]  }\left(  \cdot\right)
\varphi\left(  {\small \star}\right)  \right)  $. Then the process $W=\left\{
W_{t}\left(  \varphi\right)  ,t\geq0,\varphi\in U\right\}  $ is a cylindrical
Wiener process as in Definition \ref{Def1}, with $V$ there replaced by $U$
\ and $Q=I_{V}$. In particular, for any $\varphi\in U$, $\left\{  W_{t}\left(
\varphi\right)  ,t\geq0\right\}  $ is a Brownian motion with variance
$t\left\Vert \varphi\right\Vert _{U}$ and for all $s,t\geq0$ and $\varphi
,\psi\in U$, $E\left(  W_{t}\left(  \varphi\right)  W_{t}\left(  \psi\right)
\right)  =\left(  s\wedge t\right)  \left\langle \varphi,\psi\right\rangle
_{U}$.
\end{proposition}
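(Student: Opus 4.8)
The plan is to read off the two defining properties in Definition \ref{Def1} directly from the isometric extension of $W$ to $U_T$ provided by the discussion preceding the statement (which rests on Lemma \ref{lemma1}). First I would record the covariance of the extended noise. Since $\varphi\mapsto W(\varphi)$ extends to a linear isometry from $U_T=L^2([0,T];U)$ into $L^2(\Omega,\mathcal{F},P)$, and since $L^2$-limits of mean-zero Gaussian variables are again mean-zero Gaussian, the extended $W$ remains a mean-zero Gaussian family with $E(W(\Phi)^2)=\|\Phi\|_{U_T}^2$. Polarizing this identity yields
\[
E\left(W(\Phi)W(\Psi)\right)=\langle\Phi,\Psi\rangle_{U_T}=\int_0^T\langle\Phi(r),\Psi(r)\rangle_U\,dr,\qquad\Phi,\Psi\in U_T.
\]

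Next I would check that $W_t(\varphi)$ is well defined and compute its covariance. For fixed $\varphi\in U$ and $t\in[0,T]$ the map $r\mapsto 1_{[0,t]}(r)\varphi$ lies in $U_T$ because $\int_0^T\|1_{[0,t]}(r)\varphi\|_U^2\,dr=t\|\varphi\|_U^2<\infty$, so $W_t(\varphi)=W(1_{[0,t]}(\cdot)\varphi(\star))$ makes sense even when $\varphi$ is a distribution. Taking $\Phi=1_{[0,t]}\varphi$ and $\Psi=1_{[0,s]}\psi$ in the polarized identity gives
\[
E\left(W_s(\varphi)W_t(\psi)\right)=\int_0^T 1_{[0,s]}(r)\,1_{[0,t]}(r)\,\langle\varphi,\psi\rangle_U\,dr=(s\wedge t)\langle\varphi,\psi\rangle_U,
\]
which is precisely condition (ii) with $Q=I_U$.

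For condition (i) I would fix $\varphi\in U$ and verify that $\{W_t(\varphi)\}_{t\ge0}$ is a Brownian motion with variance $t\langle\varphi,\varphi\rangle_U$. The marginal $E(W_t(\varphi)^2)=t\|\varphi\|_U^2$ and the vanishing mean are special cases of the covariance above. Because $W$ is a Gaussian family, every finite vector of increments $W_{t_i}(\varphi)-W_{s_i}(\varphi)$ is jointly Gaussian; inserting the covariance formula shows that for $s_1\le t_1\le s_2\le t_2$ the cross-covariance equals $[(t_1\wedge t_2)-(t_1\wedge s_2)-(s_1\wedge t_2)+(s_1\wedge s_2)]\|\varphi\|_U^2=0$, so disjoint increments are uncorrelated and hence, being jointly Gaussian, independent. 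The same identity gives $E((W_t(\varphi)-W_s(\varphi))^2)=(t-s)\|\varphi\|_U^2$ for $s\le t$, i.e. stationary Gaussian increments with the correct variance.

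The only step that is not pure algebra is the existence of continuous sample paths. Here the plan is to invoke Kolmogorov's continuity criterion: each increment is centered Gaussian, so $E(|W_t(\varphi)-W_s(\varphi)|^{2k})=c_k\,|t-s|^k\,\|\varphi\|_U^{2k}$ for every $k$, which supplies the moment bound needed to produce a continuous modification of $t\mapsto W_t(\varphi)$. I expect this continuity argument to be the main (indeed the only) genuine obstacle; the remainder reduces to the isometry, polarization, and the elementary evaluation of $(s\wedge t)$ via products of indicator functions.
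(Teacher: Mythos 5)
Your proposal is correct and follows essentially the same route as the paper, which simply observes that Lemma \ref{lemma1} lets one extend $W$ isometrically from $\mathcal{I}([0,T])\otimes_{\text{alg}}\mathcal{D}_{\mathbb{R}}(\mathbb{Q}_{p}^{N})$ to $U_{T}$ and then reads off the cylindrical Wiener process structure. You have merely written out the details (polarization, the $s\wedge t$ computation from products of indicators, independence of increments via joint Gaussianity, and Kolmogorov continuity) that the paper leaves implicit under the phrase ``standard methods for extending an isometry.''
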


\begin{remark}
\label{StochasticIntegrals}This proposition allow us to use the stochastic
integration defined in Section \ref{StochasticInt}. This defines the
stochastic integral $g\cdot W$ for all $g\in L^{2}\left(  \Omega\times\left[
0,T\right]  ;U\right)  $. In order to use the stochastic integral of Section
\ref{StochasticInt}, let $\left(  e_{j}\right)  _{j}\subset\mathcal{D}%
_{\mathbb{R}}(\mathbb{Q}_{p}^{N})$ be a complete orthonormal basis of $U$, and
consider the cylindrical Wiener process $\left\{  W_{t}(\varphi)\right\}  $
defined in Proposition \ref{Prop1}. For any predictable process $g$ in
$L^{2}\left(  \Omega\times\left[  0,T\right]  ;U\right)  $, the stochastic
integral with respect to $W$ is%
\[
g\cdot W=%
{\displaystyle\int\nolimits_{0}^{T}}
g_{s}dW_{s}:=%
{\displaystyle\sum\nolimits_{j=1}^{+\infty}}
{\displaystyle\int\nolimits_{0}^{T}}
\left\langle g_{s},e_{j}\right\rangle _{U}dW_{s}\left(  e_{j}\right)  \text{,
}%
\]
and the isometry property is given by%
\begin{equation}
E\left(  \left(  g\cdot W\right)  ^{2}\right)  =E\left(  \left(
{\displaystyle\int\nolimits_{0}^{T}}
g_{s}dW_{s}\right)  ^{2}\right)  =E\left(
{\displaystyle\int\nolimits_{0}^{T}}
\left\Vert g_{s}\right\Vert _{U}^{2}ds\right)  . \label{Eq5}%
\end{equation}
We also use the notation $\int_{0}^{T}\int_{\mathbb{Q}_{p}^{N}}g(s,y)W(ds,dy)$
instead of $\int_{0}^{T}g_{s}dW_{s}$. In later sections we will also use the
notation $E\left(  \int_{0}^{T}\left\Vert g\left(  s\right)  \right\Vert
_{U}^{2}ds\right)  $ for $E\left(  \left(  g\cdot W\right)  ^{2}\right)  $.
\end{remark}

\subsection{The spectral measure}

Recall that $\mu$\ is the spectral measure of $W$. In the following we use a
function $\Gamma$ satisfying the following hypothesis:

\textbf{Hypothesis A. }The function $\Gamma$ is defined on $\mathbb{R}%
_{+}:=\left[  0,+\infty\right)  $ with values in $\mathcal{D}_{\mathbb{R}%
}^{\prime}(\mathbb{Q}_{p}^{N})$ such that, for all $t>0$, $\Gamma\left(
t\right)  $ is a positive distribution satisfying%
\begin{equation}%
{\displaystyle\int\nolimits_{0}^{T}}
dt%
{\displaystyle\int\nolimits_{\mathbb{Q}_{p}^{N}}}
\left\vert \mathcal{F}\Gamma\left(  t\right)  \left(  \xi\right)  \right\vert
^{2}d\mu\left(  \xi\right)  <+\infty, \label{condition1}%
\end{equation}
and $\Gamma$ is associated with a measure $\Gamma\left(  t,d^{N}x\right)  $
such that, for all $T>0$,
\begin{equation}
\sup_{0\leq t\leq T}\Gamma\left(  t,\mathbb{Q}_{p}^{N}\right)  <+\infty.
\label{condition2}%
\end{equation}

We\ now set $\Gamma\left(  t,x\right)  =\mathcal{F}_{\xi\rightarrow x}%
^{-1}\left(  e^{-t\left\vert a\left(  \xi\right)  \right\vert _{p}^{\beta}%
}\right)  $, for $t>0$, and $\Gamma\left(  0,x\right)  :=\delta\left(
x\right)  $, i.e $\Gamma$ is the fundamental solution of (\ref{CauchyProblem}%
), and since $\Gamma\left(  t,x\right)  \in L^{1}\left(  \mathbb{Q}_{p}%
^{N},d^{N}x\right)  $ for $t>0$, it defines an element of $\mathcal{D}%
_{\mathbb{R}}^{\prime}(\mathbb{Q}_{p}^{N})$. In addition, $\Gamma\left(
t,d^{N}x\right)  :=\Gamma\left(  t,x\right)  d^{N}x$, and by Theorem
\ref{Thm1B} (ii) and (v),%

\[
\sup_{0\leq t\leq T}\Gamma\left(  t,\mathbb{Q}_{p}^{N}\right)  =\sup_{0<t\leq
T}%
{\displaystyle\int\nolimits_{\mathbb{Q}_{p}^{N}}}
\Gamma\left(  t,x\right)  d^{N}x=1.
\]
Hence $\Gamma\left(  t,d^{N}x\right)  $ satisfies (\ref{condition2}).

\begin{remark}
If $H\left(  t,x\right)  $ is a function on $\mathbb{R}\times\mathbb{Q}%
_{p}^{N}$, we use $H\left(  t\right)  $ instead of $H\left(  t,\cdot\right)
$. If $G\left(  t,x,\omega\right)  $ is a function on $\mathbb{R}%
\times\mathbb{Q}_{p}^{N}\times\Omega$, we use $G\left(  t,x\right)  $ instead
of $G\left(  t,x,\omega\right)  $, as it is customary in probability, in
certain special cases we will use $G\left(  t,x\right)  \left(  \omega\right)
$.
\end{remark}

On the other hand, by using Fubini's Theorem and inequality (\ref{elliptic}),
it is easy to check that condition (\ref{condition1}) is equivalent to
\begin{equation}%
{\displaystyle\int\nolimits_{\mathbb{Q}_{p}^{N}}}
\text{ }\frac{d\mu\left(  \xi\right)  }{\max\left(  1,\left\Vert
\xi\right\Vert _{p}\right)  ^{d\beta}}<+\infty. \label{condition1A}%
\end{equation}

\begin{lemma}
\label{lemma4}With the notation of Lemma \ref{lemma0}, assuming $\int
_{\mathbb{Q}_{p}^{N}}f(x)K_{d\beta}\left(  x\right)  d^{N}x<+\infty$ and
(\ref{condition1A}), we have%
\[%
{\displaystyle\int\nolimits_{\mathbb{Q}_{p}^{N}}}
\frac{d\mu\left(  \xi\right)  }{\max\left(  1,\left\Vert \xi\right\Vert
_{p}\right)  ^{d\beta}}=\int_{\mathbb{Q}_{p}^{N}}f(x)K_{d\beta}\left(
x\right)  d^{N}x.
\]

\end{lemma}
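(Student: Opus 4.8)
The plan is to extend the Bochner-Schwartz identity $\int_{\mathbb{Q}_{p}^{N}} f(x)\varphi(x)\,d^{N}x = \int_{\mathbb{Q}_{p}^{N}}\widehat{\varphi}(\xi)\,d\mu(\xi)$, which Theorem \ref{Thm1C} guarantees for every test function $\varphi\in\mathcal{D}(\mathbb{Q}_{p}^{N})$, to the specific choice $\varphi=K_{d\beta}$. By Lemma \ref{lemma0} we have $K_{d\beta}\in L^{1}$ with $\mathcal{F}K_{d\beta}(\xi)=\max(1,\Vert\xi\Vert_{p})^{-d\beta}$, so the asserted equality is exactly this extended relation, both sides being finite by hypothesis and by (\ref{condition1A}). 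The only obstruction is that $K_{d\beta}$ is not a test function: it is compactly supported (inside $\mathbb{Z}_{p}^{N}$) and locally constant away from the origin, but it is singular at $x=0$.

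First I would remove the singularity by truncation. For $m\geq0$ set $\varphi_{m}(x):=K_{d\beta}(x)\left(1-\Omega(p^{m}\Vert x\Vert_{p})\right)$, which excises the ball $\Vert x\Vert_{p}\leq p^{-m}$; each $\varphi_{m}$ is locally constant with compact support, hence $\varphi_{m}\in\mathcal{D}(\mathbb{Q}_{p}^{N})$, so Theorem \ref{Thm1C} gives $\int f\varphi_{m}\,d^{N}x=\int\widehat{\varphi_{m}}\,d\mu$. A short sign analysis of the formula in Lemma \ref{lemma0} shows $K_{d\beta}\geq0$ in all three regimes $\alpha<N$, $\alpha=N$, $\alpha>N$ (with $\alpha:=d\beta$); since $f\geq0$ and $\varphi_{m}\nearrow K_{d\beta}$ pointwise, monotone convergence yields $\int f\varphi_{m}\,d^{N}x\to\int fK_{d\beta}\,d^{N}x$. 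This settles the left-hand side.

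For the right-hand side, write $\widehat{\varphi_{m}}=\mathcal{F}K_{d\beta}-\widehat{\eta_{m}}$, where $\eta_{m}:=K_{d\beta}\,\Omega(p^{m}\Vert\cdot\Vert_{p})$ is the piece supported on $\Vert x\Vert_{p}\leq p^{-m}$. Since $\mathcal{F}K_{d\beta}(\xi)=\max(1,\Vert\xi\Vert_{p})^{-d\beta}$ is $\mu$-integrable by (\ref{condition1A}), it suffices to show $\int\widehat{\eta_{m}}\,d\mu\to0$. For each fixed $\xi$ one has $\widehat{\eta_{m}}(\xi)\to0$: once $p^{m}\geq\Vert\xi\Vert_{p}$ the character $\chi_{p}(-\xi\cdot x)$ is identically $1$ on the support of $\eta_{m}$, so $\widehat{\eta_{m}}(\xi)=c_{m}:=\int_{\Vert x\Vert_{p}\leq p^{-m}}K_{d\beta}\,d^{N}x$, and $c_{m}\to0$ because $K_{d\beta}\in L^{1}$ and $\{0\}$ is null. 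The plan is then to invoke dominated convergence, for which I must produce a single $\mu$-integrable majorant of the family $\{\,|\widehat{\eta_{m}}|\,\}_{m}$.

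The main obstacle is exactly this uniform domination. I expect to establish $|\widehat{\eta_{m}}(\xi)|\leq C\max(1,\Vert\xi\Vert_{p})^{-d\beta}$ with $C$ independent of $m$, so that $\max(1,\Vert\xi\Vert_{p})^{-d\beta}$ dominates and dominated convergence applies. This rests on an explicit $p$-adic computation: decompose $\eta_{m}$ over the spheres $\Vert x\Vert_{p}=p^{-k}$, $k\geq m$, on which $K_{d\beta}$ is constant, and sum the elementary integrals $\int_{\Vert x\Vert_{p}=p^{-k}}\chi_{p}(-\xi\cdot x)\,d^{N}x$. For $\Vert\xi\Vert_{p}\leq p^{m}$ this gives $\widehat{\eta_{m}}(\xi)=c_{m}=O(p^{-m\,d\beta})$, which is $\leq C\max(1,\Vert\xi\Vert_{p})^{-d\beta}$ since the latter is at least $p^{-m\,d\beta}$ on that region; for $\Vert\xi\Vert_{p}>p^{m}$ the telescoping sphere sums cancel except on the spheres of radius comparable to $\Vert\xi\Vert_{p}^{-1}$, all of which survive the cut $k\geq m$, so $\widehat{\eta_{m}}(\xi)$ collapses to a constant multiple of $\max(1,\Vert\xi\Vert_{p})^{-d\beta}$ that is independent of $m$. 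Combining the two regimes furnishes the majorant, and the case $\alpha=N$ (where $K_{d\beta}$ carries a logarithmic factor) is treated by the same sphere decomposition with the evident modifications.
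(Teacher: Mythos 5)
Your overall strategy --- truncating $K_{d\beta}$ near the origin by the sharp cutoff $\varphi_{m}=K_{d\beta}\left(1-\Omega(p^{m}\Vert\cdot\Vert_{p})\right)$ and passing to the limit in $\int f\varphi_{m}\,d^{N}x=\int\widehat{\varphi_{m}}\,d\mu$ --- is the Fourier dual of what the paper does: the paper instead convolves $K_{d\beta}$ with $\delta_{n}$, so that on the spectral side the Fourier transform is merely multiplied by $\Omega(p^{-n}\Vert\xi\Vert_{p})\leq1$ and the dominating function for the Dominated Convergence Theorem is immediate. The $x$-side of your argument is fine ($\varphi_{m}\in\mathcal{D}$, $K_{d\beta}\geq0$, monotone convergence). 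The gap is exactly at the uniform bound $\vert\widehat{\eta_{m}}(\xi)\vert\leq C\max(1,\Vert\xi\Vert_{p})^{-d\beta}$ that you identify as the crux: this bound is \emph{false} whenever $d\beta\geq N$. For $\Vert\xi\Vert_{p}=p^{m}$ you correctly get $\widehat{\eta_{m}}(\xi)=c_{m}=\int_{\Vert x\Vert_{p}\leq p^{-m}}K_{d\beta}(x)\,d^{N}x$, but $c_{m}$ is not $O(p^{-md\beta})$ in general: when $d\beta>N$ the kernel $K_{d\beta}$ tends to the positive constant $(1-p^{-d\beta})/(1-p^{N-d\beta})$ at the origin, so $c_{m}\asymp p^{-mN}$ and $c_{m}/p^{-md\beta}=p^{m(d\beta-N)}\rightarrow+\infty$; when $d\beta=N$ the logarithmic factor gives $c_{m}\asymp m\,p^{-mN}$, again exceeding the required $p^{-mN}$ by an unbounded factor. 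Only for $d\beta<N$ does $c_{m}\asymp p^{-md\beta}$ hold, and only there does your dominated convergence argument close.

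This is not a repairable estimate within your scheme, because the step $\int\widehat{\eta_{m}}\,d\mu\rightarrow0$ is equivalent to the lemma itself (since $\int\widehat{\varphi_{m}}\,d\mu=\int\widehat{K_{d\beta}}\,d\mu-\int\widehat{\eta_{m}}\,d\mu$ and the left-hand side converges to $\int fK_{d\beta}\,d^{N}x$), yet your argument for it uses only (\ref{condition1A}) and Fourier analysis of $\eta_{m}$, never the hypothesis $\int fK_{d\beta}\,d^{N}x<+\infty$. And (\ref{condition1A}) alone does not suffice: taking $d\mu=d^{N}\xi$ (which satisfies (\ref{condition1A}) precisely when $d\beta>N$), Fourier inversion gives $\int\widehat{\eta_{m}}(\xi)\,d^{N}\xi=\eta_{m}(0)=K_{d\beta}(0)=(1-p^{-d\beta})/(1-p^{N-d\beta})>0$ for \emph{every} $m$, so the limit is not $0$. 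Hence for $d\beta\geq N$ the hypothesis on $\int fK_{d\beta}$ must be injected into the spectral-side limit --- for instance by rewriting $c_{m}\,\mu(B_{m}^{N})$ in terms of $\int_{\Vert x\Vert_{p}\leq p^{-m}}f(x)\,d^{N}x$ --- at which point one is essentially forced back to the paper's convolution argument. As written, your proof is complete only in the regime $d\beta<N$.
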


\begin{proof}
Set
\begin{equation}
\delta_{n}\left(  x\right)  :=p^{Nn}\Omega\left(  p^{n}\left\Vert x\right\Vert
_{p}\right)  \text{, for }n\in\mathbb{N}. \label{Delta_n}%
\end{equation}
Then $\int_{\mathbb{Q}_{p}^{N}}\delta_{n}\left(  x\right)  d^{N}x=1$ for any
$n$, $\delta_{n}$ $\underrightarrow{\mathcal{D}^{\prime}}$ $\delta$ and
$\mathcal{F}\delta_{n}\left(  \xi\right)  =\Omega\left(  p^{-n}\left\Vert
\xi\right\Vert _{p}\right)  $ $\underrightarrow{\text{{\tiny pointwise}}}$
$1$. Notice that $\left(  K_{d\beta}\ast\delta_{n}\right)  \left(  x\right)
=K_{d\beta}\left(  x\right)  $, for $x\in\mathbb{Q}_{p}^{N}\smallsetminus
\left\{  0\right\}  $ and for any $n>N(x)$, since $K_{d\beta}$ is radial, then%
\begin{equation}
f(x)\left(  K_{d\beta}\ast\delta_{n}\right)  \left(  x\right)
=\ f(x)K_{d\beta}\left(  x\right)  \text{, for }x\in\mathbb{Q}_{p}%
^{N}\smallsetminus\left\{  0\right\}  \text{ and }n\text{ big enough.}
\label{AE}%
\end{equation}

Now, by the Riesz-Markov-Kakutani Theorem, $\mu$ is an element of
$\mathcal{D}^{\prime}\left(  \mathbb{Q}_{p}^{N}\right)  $ and since
$K_{d\beta}\ast\delta_{n}\in\mathcal{D}\left(  \mathbb{Q}_{p}^{N}\right)  $,
we have%
\begin{align*}
\left(  \mu,\mathcal{F}\left(  K_{d\beta}\ast\delta_{n}\right)  \right)   &
=\left(  \mathcal{F}\mu,K_{d\beta}\ast\delta_{n}\right)  =\left(  f,K_{d\beta
}\ast\delta_{n}\right) \\
&  =\int_{\mathbb{Q}_{p}^{N}}f(x)\left(  K_{d\beta}\ast\delta_{n}\right)
\left(  x\right)  d^{N}x.
\end{align*}

Then by applying the Dominated Convergence Theorem and using (\ref{AE})\ and
the hypothesis $\int_{\mathbb{Q}_{p}^{N}}f(x)K_{d\beta}\left(  x\right)
d^{N}x<+\infty$, we get
\[
\lim_{n\rightarrow+\infty}\int_{\mathbb{Q}_{p}^{N}}f(x)\left(  K_{d\beta}%
\ast\delta_{n}\right)  \left(  x\right)  d^{N}x=\int_{\mathbb{Q}_{p}^{N}%
}f(x)K_{d\beta}\left(  x\right)  d^{N}x.
\]
On the other hand, by the Riesz-Markov-Kakutani Theorem,%
\begin{align*}
\left(  \mu,\mathcal{F}\left(  K_{d\beta}\ast\delta_{n}\right)  \right)   &
=\left(  \mu,\frac{\left(  \mathcal{F}\delta_{n}\right)  \left(  \xi\right)
}{\max\left(  1,\left\Vert \xi\right\Vert _{p}\right)  ^{d\beta}}\right) \\
&  =\int_{\mathbb{Q}_{p}^{N}}\frac{\left(  \mathcal{F}\delta_{n}\right)
\left(  \xi\right)  }{\max\left(  1,\left\Vert \xi\right\Vert _{p}\right)
^{d\beta}}d\mu\left(  \xi\right)  ,
\end{align*}
now, by the Dominated Convergence Theorem and Hypothesis (\ref{condition1A}),%
\[
\lim_{n\rightarrow+\infty}\int_{\mathbb{Q}_{p}^{N}}\frac{\left(
\mathcal{F}\delta_{n}\right)  \left(  \xi\right)  }{\max\left(  1,\left\Vert
\xi\right\Vert _{p}\right)  ^{d\beta}}d\mu\left(  \xi\right)  =\int
_{\mathbb{Q}_{p}^{N}}\frac{d\mu\left(  \xi\right)  }{\max\left(  1,\left\Vert
\xi\right\Vert _{p}\right)  ^{d\beta}}.
\]

\end{proof}

From Lemmas \ref{lemma0}-\ref{lemma4}, we obtain the following result:

\begin{theorem}
\label{Thm1}%
\begin{multline*}%
{\displaystyle\int\nolimits_{\mathbb{Q}_{p}^{N}}}
\frac{d\mu\left(  \xi\right)  }{\max\left(  1,\left\Vert \xi\right\Vert
_{p}\right)  ^{d\beta}}<+\infty\Leftrightarrow\\
\left\{
\begin{array}
[c]{lll}%
\frac{1-p^{-d\beta}}{1-p^{d\beta-N}}%
{\displaystyle\int\nolimits_{\left\Vert x\right\Vert _{p}\leq1}}
\left(  \left\Vert x\right\Vert _{p}^{d\beta-N}-p^{d\beta-N}\right)  f\left(
x\right)  d^{N}x<+\infty & \text{if} & d\beta\neq N\\
&  & \\
\left(  1-p^{-N}\right)
{\displaystyle\int\nolimits_{\left\Vert x\right\Vert _{p}\leq1}}
\log_{p}\left(  \frac{p}{\left\Vert x\right\Vert _{p}}\right)  f\left(
x\right)  d^{N}x<+\infty & \text{if} & d\beta=N.
\end{array}
\right.
\end{multline*}

\end{theorem}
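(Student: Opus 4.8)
The plan is to show that the two quantities in the claimed equivalence are in fact \emph{equal} as elements of $[0,+\infty]$, so that finiteness of one forces finiteness of the other. Write
\[
A:=\int_{\mathbb{Q}_{p}^{N}}\frac{d\mu\left(  \xi\right)  }{\max\left(1,\left\Vert \xi\right\Vert _{p}\right)  ^{d\beta}},\qquad B:=\int_{\mathbb{Q}_{p}^{N}}f(x)K_{d\beta}\left(  x\right)  d^{N}x .
\]
Both are well defined in $[0,+\infty]$: $f\geq0$ by hypothesis, and a direct inspection of the formula in Lemma \ref{lemma0} shows $K_{d\beta}\geq0$ in each of the regimes $d\beta<N$, $d\beta=N$, $d\beta>N$ (the sign of the coefficient $\tfrac{1-p^{-d\beta}}{1-p^{d\beta-N}}$ compensating the sign of $\left\Vert x\right\Vert _{p}^{d\beta-N}-p^{d\beta-N}$). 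Since $K_{d\beta}$ carries the factor $\Omega\left(  \left\Vert x\right\Vert _{p}\right)$, it is supported on $\left\Vert x\right\Vert _{p}\leq1$, and substituting the explicit expression of Lemma \ref{lemma0} turns $B$ into exactly the right-hand side of the statement, in both cases $d\beta\neq N$ and $d\beta=N$. Thus it suffices to prove $A=B$.

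I would reuse the identity obtained in the proof of Lemma \ref{lemma4}. With $\delta_{n}$ as in (\ref{Delta_n}), for every $n$ the function $K_{d\beta}\ast\delta_{n}$ lies in $\mathcal{D}\left(  \mathbb{Q}_{p}^{N}\right)$ and the two evaluations of $\left(  \mu,\mathcal{F}\left(  K_{d\beta}\ast\delta_{n}\right)  \right)$ give the equality $\int_{\mathbb{Q}_{p}^{N}}\Omega\left(  p^{-n}\left\Vert \xi\right\Vert _{p}\right)  \max\left(  1,\left\Vert \xi\right\Vert _{p}\right)  ^{-d\beta}\,d\mu=\int_{\mathbb{Q}_{p}^{N}}f\,(K_{d\beta}\ast\delta_{n})\,d^{N}x$, both sides finite. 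Letting $n\rightarrow+\infty$, the left integrand increases pointwise to $\max\left(  1,\left\Vert \xi\right\Vert _{p}\right)  ^{-d\beta}$, so by the Monotone Convergence Theorem the left side increases to $A$ \emph{without any integrability assumption}; this is the point that upgrades Lemma \ref{lemma4} to an identity valid in $[0,+\infty]$. For the right side, the ultrametric structure gives that the ball $\{y:\left\Vert y-x\right\Vert _{p}\leq p^{-n}\}$ lies in the sphere $\left\Vert \cdot\right\Vert _{p}=\left\Vert x\right\Vert _{p}$ when $\left\Vert x\right\Vert _{p}>p^{-n}$ and equals $\{\left\Vert y\right\Vert _{p}\leq p^{-n}\}$ otherwise, whence, $K_{d\beta}$ being radial,
\[
\left(  K_{d\beta}\ast\delta_{n}\right)  \left(  x\right)  =\begin{cases} K_{d\beta}\left(  x\right)  & \text{if }\left\Vert x\right\Vert _{p}>p^{-n},\\ c_{n} & \text{if }\left\Vert x\right\Vert _{p}\leq p^{-n}, \end{cases}\qquad c_{n}:=p^{Nn}\int_{\left\Vert y\right\Vert _{p}\leq p^{-n}}K_{d\beta}\left(  y\right)  d^{N}y .
\]
Hence the right side equals $\int_{\left\Vert x\right\Vert _{p}>p^{-n}}f K_{d\beta}+R_{n}$ with $R_{n}:=c_{n}\int_{\left\Vert x\right\Vert _{p}\leq p^{-n}}f\geq0$; the first term increases to $B$ by Monotone Convergence, so $A=B+\lim_{n}R_{n}$, and in particular $B=+\infty$ already forces $A=+\infty$.

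The main obstacle is to prove $R_{n}\rightarrow0$ when $B<+\infty$, which then yields $A=B$ and closes the equivalence. Here the $p$-adic mollification behaves differently from its Archimedean analogue: it replaces $K_{d\beta}$ by its \emph{average} $c_{n}$ over the central ball rather than smoothing it gently. The way I would control $R_{n}$ is to compare $c_{n}$ with the value of $K_{d\beta}$ on the outer sphere $\left\Vert x\right\Vert _{p}=p^{-n}$. From the explicit formula, when $d\beta<N$ the radial function $K_{d\beta}$ is largest near the origin and attains its minimum over $\{\left\Vert x\right\Vert _{p}\leq p^{-n}\}$ on that outer sphere, so $c_{n}$ is bounded by a fixed multiple of this minimum, giving $R_{n}\leq C\int_{\left\Vert x\right\Vert _{p}\leq p^{-n}}fK_{d\beta}$, which is the tail of the convergent integral $B$ and tends to $0$. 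When $d\beta\geq N$ the function $K_{d\beta}$ is bounded (respectively logarithmically bounded) near $0$, so $c_{n}$ stays bounded while $\int_{\left\Vert x\right\Vert _{p}\leq p^{-n}}f\rightarrow0$ (finiteness of $B$ forces local integrability of $f$ near the origin), again giving $R_{n}\rightarrow0$. This comparison is the only genuinely delicate step.
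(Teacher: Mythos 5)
Your proposal is correct in substance and takes a genuinely different route from the paper. The paper obtains Theorem \ref{Thm1} by citing Lemmas \ref{lemma0} and \ref{lemma4}; but Lemma \ref{lemma4} establishes the identity $A=B$ only under the a priori assumption that \emph{both} integrals are finite (its proof applies the Dominated Convergence Theorem to each side separately, with each finiteness hypothesis supplying the domination), so the step from that identity to the equivalence of the two finiteness conditions is left implicit. You instead prove $A=B$ as an identity in $[0,+\infty]$: Monotone Convergence handles the $\mu$-side with no integrability hypothesis at all, and on the $f$-side you decompose $\int f\,(K_{d\beta}\ast\delta_{n})$ into the truncated integral $\int_{\Vert x\Vert_{p}>p^{-n}}fK_{d\beta}$, which increases to $B$, plus the remainder $R_{n}=c_{n}\int_{\Vert x\Vert_{p}\leq p^{-n}}f$. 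Your computation of $K_{d\beta}\ast\delta_{n}$ via the ultrametric inequality is exactly right, and the comparison $c_{n}\leq C\,\min_{\Vert x\Vert_{p}\leq p^{-n}}K_{d\beta}$ does yield $R_{n}\leq C\int_{\Vert x\Vert_{p}\leq p^{-n}}fK_{d\beta}\rightarrow0$ when $B<+\infty$. What this buys is a cleaner logical structure: one identity valid in $[0,+\infty]$ gives the equivalence immediately, at the cost of an explicit convolution computation that the paper's mollification argument avoids.

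One small repair is needed: in the case $d\beta=N$ you assert that $c_{n}$ stays bounded, but it does not; a direct computation gives $c_{n}\sim C(1+n)$, since $K_{N}$ blows up logarithmically at the origin. The case is nevertheless covered by the very same comparison you spell out for $d\beta<N$: on the ball $\Vert x\Vert_{p}\leq p^{-n}$ one has $K_{N}\geq(1-p^{-N})(1+n)$, so $c_{n}\leq C(1+n)\leq C^{\prime}\min_{\Vert x\Vert_{p}\leq p^{-n}}K_{N}$, and again $R_{n}\leq C^{\prime}\int_{\Vert x\Vert_{p}\leq p^{-n}}fK_{N}\rightarrow0$. With that adjustment the argument is complete.
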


\subsection{\label{Integrands}Examples of integrands}

The main examples of integrands are provided by the following result:

\begin{proposition}
\label{Prop2}Assume that $\Gamma$ satisfies Hypothesis A. Let
\[
Y=\left\{  Y(t,x),\left(  t,x\right)  \in\left[  0,T\right]  \times
\mathbb{Q}_{p}^{N}\right\}
\]
be a predictable process such that%
\[
C_{Y}:=\sup_{\left(  t,x\right)  \in\left[  0,T\right]  \times\mathbb{Q}%
_{p}^{N}}E\left(  \left\vert Y\left(  t,x\right)  \right\vert ^{2}\right)
<+\infty.
\]
Then, the random element $G=G\left(  t,x\right)  =Y\left(  t,x\right)
\Gamma\left(  t,x\right)  $ is a predictable process with values in
$L^{2}\left(  \Omega\times\left[  0,T\right]  ;U\right)  $. Moreover,
\begin{align*}
E\left(  \left\Vert G\right\Vert _{U_{T}}^{2}\right)   &  =E\left[
{\displaystyle\int\nolimits_{0}^{T}}
{\displaystyle\int\nolimits_{\mathbb{Q}_{p}^{N}}}
\left\vert \mathcal{F}\left(  \Gamma\left(  t\right)  Y\left(  t\right)
\left(  \xi\right)  \right)  \right\vert ^{2}d\mu\left(  \xi\right)  dt\right]
\\
&  \leq C_{Y}%
{\displaystyle\int\nolimits_{0}^{T}}
{\displaystyle\int\nolimits_{\mathbb{Q}_{p}^{N}}}
\left\vert \mathcal{F}\left(  \Gamma\left(  t\right)  Y\left(  t\right)
\left(  \xi\right)  \right)  \right\vert ^{2}d\mu\left(  \xi\right)  dt
\end{align*}
and
\begin{equation}
E\left(  \left\vert G\cdot W\right\vert ^{2}\right)  \leq%
{\displaystyle\int\nolimits_{0}^{T}}
\left(  \sup_{x\in\mathbb{Q}_{p}^{N}}E\left(  \left\vert Y\left(  s,x\right)
\right\vert ^{2}\right)  \right)
{\displaystyle\int\nolimits_{\mathbb{Q}_{p}^{N}}}
\left\vert \mathcal{F}\left(  \Gamma\left(  s\right)  \left(  \xi\right)
\right)  \right\vert ^{2}d\mu\left(  \xi\right)  ds. \label{Key_inequality}%
\end{equation}

\end{proposition}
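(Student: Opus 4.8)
The plan is to reduce everything to a single Parseval-type identity hidden in the covariance of $W$, and then to make that identity rigorous through a $p$-adic regularization of $\Gamma$.

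First I would record the computation that drives the result (rigorous when the spatial factor is a test function, and extended to $\Gamma$ by the limiting procedure below). For a real, test-function-valued process $\Phi$, the definition of $\langle\cdot,\cdot\rangle_U$ together with the covariance (\ref{Eq2}) gives
\begin{equation*}
E(\Vert Y(t)\Phi\Vert_U^2)=\int_{\mathbb{Q}_p^N}\int_{\mathbb{Q}_p^N}E[Y(t,x)Y(t,y)]\,\Phi(x)\Phi(y)\,f(x-y)\,d^Nx\,d^Ny.
\end{equation*}
Taking $\Phi=\Gamma(t)$, which is non-negative by Theorem \ref{Thm1B}(i), and using $\vert E[Y(t,x)Y(t,y)]\vert\le C_Y$ (Cauchy--Schwarz in $L^2(\Omega)$) together with $f\ge 0$, the integrand is controlled pointwise, and one gets $E(\Vert Y(t)\Gamma(t)\Vert_U^2)\le C_Y\Vert\Gamma(t)\Vert_U^2=C_Y\int_{\mathbb{Q}_p^N}\vert\mathcal F\Gamma(t)(\xi)\vert^2 d\mu(\xi)$. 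Integrating in $t$, invoking (\ref{condition1})/(\ref{condition1A}) of Hypothesis A for finiteness, and using the isometry (\ref{Eq5}) $E(\vert G\cdot W\vert^2)=E(\Vert G\Vert_{U_T}^2)=\int_0^T E(\Vert G(t)\Vert_U^2)\,dt$, one obtains (\ref{Key_inequality}); the middle display of the statement is just this same identity recorded before the Cauchy--Schwarz step.

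The obstacle is that, by Tonelli (since $\int E\vert Y\vert\,\Gamma\le\sqrt{C_Y}\int\Gamma=\sqrt{C_Y}$ by Theorem \ref{Thm1B}(ii)), the map $x\mapsto Y(t,x)(\omega)\Gamma(t,x)$ is for fixed $\omega$ only an $L^1$ function, not a test function; so neither the displayed Parseval identity nor the predictability of $G$ as a $U$-valued process is immediate. I would therefore approximate $\Gamma(t)$ in $\mathcal{D}_{\mathbb{R}}(\mathbb{Q}_p^N)$ by the two-step $p$-adic smoothing alluded to in the introduction: with $\delta_n$ as in (\ref{Delta_n}), set $\Gamma_n(t):=(\Omega(p^{-n}\Vert\cdot\Vert_p)\,\Gamma(t))\ast\delta_n$. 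Cutting by $\Omega(p^{-n}\Vert\cdot\Vert_p)$ forces compact support while convolution with $\delta_n$ forces local constancy, so $\Gamma_n(t)\in\mathcal{D}_{\mathbb{R}}(\mathbb{Q}_p^N)$ and $\Gamma_n(t)\ge 0$ (neither operation alone would yield a test function, which is precisely why the $p$-adic argument is heavier than the single convolution of \cite{Dalang-Quer}). Then $G_n(t,x):=Y(t,x)\Gamma_n(t,x)$ is, for each $(t,\omega)$, a finite combination $\sum_j c_j(t)\Omega(p^m\Vert x-\widetilde x_j\Vert_p)$ whose coefficients $c_j(t)Y(t,\widetilde x_j)$ are predictable; hence each $G_n$ is a genuinely predictable $U$-valued process to which the first paragraph applies verbatim, giving $E(\Vert G_n\Vert_{U_T}^2)\le C_Y\int_0^T\Vert\Gamma_n(t)\Vert_U^2\,dt<+\infty$.

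It then remains to pass to the limit, i.e. to prove $G_n\to G$ in $L^2(\Omega\times[0,T];U)$, which at once transfers predictability to $G$, legitimizes the Parseval identity for $G$, and delivers the norm formula. On the Fourier side $\mathcal F\Gamma_n(t)(\xi)=(\mathcal F\Gamma(t)\ast\delta_n)(\xi)\,\Omega(p^{-n}\Vert\xi\Vert_p)\to e^{-t\vert a(\xi)\vert_p^\beta}$ pointwise, and, since on $B_{-n}(\xi)$ with $\Vert\xi\Vert_p\ge p$ every point has norm $\Vert\xi\Vert_p$, the ellipticity bound (\ref{elliptic}) gives the uniform-in-$n$ domination $\vert\mathcal F\Gamma_n(t)(\xi)\vert\le e^{-tC_0^{\beta}\Vert\xi\Vert_p^{d\beta}}$ there (and $\le 1$ on $\Vert\xi\Vert_p<p$), a function that is $d\mu\,dt$-integrable precisely by (\ref{condition1A}); dominated convergence then yields the \emph{deterministic} statement $\int_0^T\Vert\Gamma_n(t)-\Gamma(t)\Vert_U^2\,dt\to 0$. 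For the random product I would split the error into a cutting error $(\Omega(p^{-n}\Vert\cdot\Vert_p)-1)\Gamma(t)\ast\delta_n$ and a convolution error $\Gamma(t)\ast\delta_n-\Gamma(t)$. The cutting error has a definite sign, so the positivity/Cauchy--Schwarz bound of the first paragraph applies and its contribution is $\le C_Y$ times its deterministic $U$-norm, which vanishes by the deterministic convergence just established.

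The hard part will be the convolution error $\Gamma(t)\ast\delta_n-\Gamma(t)$: it is signed, so the positivity handle is gone, and although its Fourier transform $-e^{-t\vert a(\xi)\vert_p^\beta}\mathbf{1}_{\{\Vert\xi\Vert_p>p^n\}}(\xi)$ is clean and supported at high frequencies, the random factor $Y$ spreads frequencies, so $E\vert\mathcal F(Y(\Gamma\ast\delta_n-\Gamma))(\xi)\vert^2$ is \emph{not} supported at high frequencies and inherits no pointwise decay in $\xi$ --- only its $d\mu$-integral is finite. Controlling this term against the possibly infinite spectral measure $\mu$ is the real crux, and I expect that a naive dominated-convergence estimate will not suffice; my intended route is to combine the uniform bound $\sup_n E(\Vert G_n\Vert_{U_T}^2)<+\infty$ with the pointwise $L^2(\Omega)$-convergence $\mathcal F(Y(t)\Gamma_n(t))(\xi)\to\mathcal F(Y(t)\Gamma(t))(\xi)$ to get weak convergence of $\xi\mapsto\mathcal F(Y(t)\Gamma_n(t))(\xi)$ in $L^2([0,T]\times\mathbb{Q}_p^N,dt\,d\mu;L^2(\Omega))$, and then upgrade weak to strong convergence via convergence of norms, thereby identifying the limit as $G$ and closing the proof.
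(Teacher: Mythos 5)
Your overall strategy is the right one --- a two-step cut-and-convolve regularization, the positivity of $\Gamma$ and $f$ combined with $\left\vert E\left[  Y(t,x)Y(t,y)\right]  \right\vert \leq C_{Y}$ to get the uniform bound, then a passage to the limit in $L^{2}\left(  \Omega\times\left[  0,T\right]  ;U\right)  $ --- and your Paragraph-1 computation is exactly the paper's key estimate. But there is a genuine gap in where you apply the smoothing. You regularize $\Gamma$ alone, setting $\Gamma_{n}(t)=\left(  \Omega\left(  p^{-n}\left\Vert \cdot\right\Vert _{p}\right)  \Gamma(t)\right)  \ast\delta_{n}$, and then multiply by $Y$. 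The resulting $G_{n}(t,x)=Y(t,x)\Gamma_{n}(t,x)$ is \emph{not} of the form $\sum_{j}c_{j}(t)Y(t,\widetilde{x}_{j})\Omega\left(  p^{m}\left\Vert x-\widetilde{x}_{j}\right\Vert _{p}\right)  $ as you claim: $Y(t,\cdot)$ is merely a predictable field with bounded second moments and has no local constancy in $x$, so $Y(t,x)\neq Y(t,\widetilde{x}_{j})$ on the ball where $\Omega\left(  p^{m}\left\Vert x-\widetilde{x}_{j}\right\Vert _{p}\right)  =1$. Hence $G_{n}(t,\cdot)$ is not a test function, it does not lie in $\mathcal{I}\otimes_{\text{alg}}\mathcal{D}_{\mathbb{R}}(\mathbb{Q}_{p}^{N})$, and showing that it even belongs to $U$ (and is predictable as a $U$-valued process) runs into exactly the same difficulty as for $Y\Gamma$ itself --- one would have to mollify the product anyway, so your reduction is circular. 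The paper instead sets $G_{k,l}(t)=\left(  \Delta_{l}\,Y(t)\Gamma(t)\right)  \ast\delta_{k}$, i.e.\ it cuts and convolves the \emph{product}: convolution with $\delta_{k}$ forces local constancy regardless of the irregularity of $Y$, and the coefficients are ball-averages of the predictable process, hence predictable.

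This misplacement is also what makes your final step intractable. With the paper's approximants the two limits decouple cleanly on the Fourier side: $\mathcal{F}G_{k,l}(t)=\mathcal{F}\left(  \Delta_{l}Y(t)\Gamma(t)\right)  \cdot\Omega\left(  p^{-k}\left\Vert \xi\right\Vert _{p}\right)  $, so $k\rightarrow+\infty$ is immediate by dominated convergence, and then $\mathcal{F}G_{l}(t)=\mathcal{F}G(t)\ast\delta_{l}\rightarrow\mathcal{F}G(t)$ a.e.\ by the $p$-adic differentiation theorem, again with dominated convergence. There is no ``frequency spreading by $Y$'' to fight, because $Y$ was inside the mollification from the start. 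In your setup that spreading is unavoidable, and your proposed remedy (weak convergence in $L^{2}\left(  dt\,d\mu;L^{2}(\Omega)\right)  $ upgraded by convergence of norms) is not carried out: the norm $E\left(  \left\Vert G\right\Vert _{U_{T}}^{2}\right)  $ whose convergence you would need is precisely the quantity whose existence and value the proposition is meant to establish. As written, the proof does not close; moving the cutoff and the mollifier onto $Y(t)\Gamma(t)$ repairs it and reduces it to the paper's argument.
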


\begin{remark}
The integral of $G$ with respect to $W$ will be also denoted by
\[
G\cdot W=%
{\displaystyle\int\nolimits_{0}^{T}}
{\displaystyle\int\nolimits_{\mathbb{Q}_{p}^{N}}}
\Gamma\left(  s,y\right)  Y\left(  s,y\right)  W(ds,d^{N}y).
\]

\end{remark}

\begin{proof}
The proof will be accomplished through several steps.

\textbf{§1.} \textbf{Assertion A}:$\ G(t)\in L^{1}\left(  \mathbb{Q}_{p}%
^{N}\right)  $, for $t\in\left(  0,T\right]  $ a.s.

Indeed, by the Hölder inequality,%
\[%
{\displaystyle\int\nolimits_{\mathbb{Q}_{p}^{N}}}
E\left(  \left\vert Y\left(  t,x\right)  \right\vert ^{2}\right)
\Gamma\left(  t,x\right)  d^{N}x\leq C_{Y}\left\Vert \Gamma\left(  t\right)
\right\Vert _{L^{1}\left(  \mathbb{Q}_{p}^{N}\right)  }\text{, for }%
t\in\left(  0,T\right]  \text{,}%
\]
cf. Theorem \ref{Thm1B} (iii). Hence,
\[%
{\displaystyle\int\nolimits_{\Omega}}
{\displaystyle\int\nolimits_{\mathbb{Q}_{p}^{N}}}
\left\vert Y\left(  t,x\right)  \left(  \omega\right)  \right\vert ^{2}%
\Gamma\left(  t,x\right)  d^{N}xdP\left(  \omega\right)  <+\infty,
\]
and by Fubini's Theorem, $\left\vert Y\left(  t,x\right)  \right\vert
^{2}\Gamma\left(  t,x\right)  \in L^{1}\left(  \mathbb{Q}_{p}^{N}\right)  $,
for $t\in\left(  0,T\right]  $ a.s. Now,%
\begin{multline*}%
{\displaystyle\int\nolimits_{\mathbb{Q}_{p}^{N}}}
\left\vert Y\left(  t,x\right)  \right\vert \Gamma\left(  t,x\right)  d^{N}x=%
{\displaystyle\int\nolimits_{\left\vert Y\left(  t,x\right)  \right\vert >1}}
\left\vert Y\left(  t,x\right)  \right\vert \Gamma\left(  t,x\right)  d^{N}x\\
+%
{\displaystyle\int\nolimits_{\left\vert Y\left(  t,x\right)  \right\vert
\leq1}}
\left\vert Y\left(  t,x\right)  \right\vert \Gamma\left(  t,x\right)  d^{N}x\\
\leq%
{\displaystyle\int\nolimits_{\mathbb{Q}_{p}^{N}}}
\left\vert Y\left(  t,x\right)  \right\vert ^{2}\Gamma\left(  t,x\right)
d^{N}x+%
{\displaystyle\int\nolimits_{\mathbb{Q}_{p}^{N}}}
\Gamma\left(  t,x\right)  d^{N}x<+\infty\text{ for }t\in\left(  0,T\right]
\text{ a.s.}%
\end{multline*}

By using the above reasoning, one verifies that $\int_{0}^{T}\int
_{\mathbb{Q}_{p}^{N}}\left\vert Y\left(  t,x\right)  \right\vert h\left(
t,x\right)  d^{N}xdt<+\infty$, for every $h\in L^{1}\left(  \left[
0,T\right]  \times\mathbb{Q}_{p}^{N}\right)  $ satisfying $h\geq0$, therefore
\begin{equation}
Y\left(  t,x\right)  \in L^{\infty}\left(  \left[  0,T\right]  \times
\mathbb{Q}_{p}^{N}\right)  \text{ a.s.} \label{BoundY}%
\end{equation}

As a consequence of Assertion A, we have $G(t)\in\mathcal{D}_{\mathbb{R}%
}^{\prime}\left(  \mathbb{Q}_{p}^{N}\right)  $, for $t\in\left(  0,T\right]  $
a.s. We now proceed to regularize this distribution. \ We set $\Delta
_{l}\left(  x\right)  :=\Omega\left(  p^{-l}\left\Vert x\right\Vert
_{p}\right)  $, $\delta_{k}\left(  x\right)  =p^{kN}\Omega\left(
p^{k}\left\Vert x\right\Vert _{p}\right)  $ for $k,l\in\mathbb{N}$. Then
$\int\delta_{k}\left(  x\right)  d^{N}x=1$, $\mathcal{F}\left(  \Delta
_{k}\right)  =\delta_{k}$, $\delta_{k}$ $\underrightarrow{\mathcal{D}%
_{\mathbb{R}}^{\prime}}$ $\delta$ (Dirac distribution), and $\Delta_{l}$
$\underrightarrow{\text{pointwise}}$ $1$, as before.

We also set $G_{k,l}(t):=\left(  \Delta_{l}Y\left(  t\right)  \Gamma\left(
t\right)  \right)  \ast\delta_{k}$, $k,l\in\mathbb{N}$, $t\in\left(
0,T\right]  $, and $\Gamma_{k}\left(  t\right)  :=\Gamma\left(  t\right)
\ast\delta_{k}$. Then, $G_{k,l}(t)\in\mathcal{D}_{\mathbb{R}}\left(
\mathbb{Q}_{p}^{N}\right)  $ for $t\in\left(  0,T\right]  $, more precisely,
\[
G_{k,l}(t)=\sum_{j}c_{j}\left(  t;k,l\right)  \Omega\left(  p^{m\left(
k\right)  }\left\Vert x-x_{j}\right\Vert _{p}\right)  .
\]
Now, since
\begin{align*}
\left\vert \left(  \Delta_{l}Y\left(  t\right)  \Gamma\left(  t\right)
\right)  \ast\delta_{k}\right\vert  &  \leq\left\vert Y\left(  t\right)
\right\vert \Gamma\left(  t\right)  \ast\delta_{k}\leq\left\Vert Y\right\Vert
_{L^{\infty}\left(  \left[  0,T\right]  \times\mathbb{Q}_{p}^{N}\right)
}\left\Vert \Gamma\left(  t\right)  \ast\delta_{k}\right\Vert _{L^{1}\left(
\mathbb{Q}_{p}^{N}\right)  }\\
&  \leq p^{Nk}\left\Vert Y\right\Vert _{L^{\infty}\left(  \left[  0,T\right]
\times\mathbb{Q}_{p}^{N}\right)  }\text{\ \ a.s.,}%
\end{align*}
cf. (\ref{BoundY}), we have $c_{j}\left(  t;k,l\right)  \in L^{\infty}\left(
\left[  0,T\right]  \right)  $ a.s. Therefore%
\begin{equation}
G_{k,l}(t)\in L^{2}\left(  \left[  0,T\right]  \right)  \otimes_{\text{alg}%
}\mathcal{D}_{\mathbb{R}}\left(  \mathbb{Q}_{p}^{N}\right)  \text{ a.s.}
\label{BoundGkl}%
\end{equation}

\textbf{§2. A bound for }$\sup_{k,l\geq1}E\left\Vert G_{k,l}\right\Vert
_{U_{T}}^{2}$.

By using the definition of the convolution and the uniform bound for the
square moments of $Y$, we get:%
\begin{align*}
\sup_{k,l\geq1}E\left\Vert G_{k,l}\right\Vert _{U_{T}}^{2}  &  =\sup
_{k,l\geq1}E%
{\displaystyle\int\nolimits_{0}^{T}}
{\displaystyle\int\nolimits_{\mathbb{Q}_{p}^{N}}}
{\displaystyle\int\nolimits_{\mathbb{Q}_{p}^{N}}}
G_{k,l}\left(  t,x\right)  f\left(  x-y\right)  G_{k,l}\left(  t,y\right)
d^{N}xd^{N}ydt\\
&  =\sup_{k,l\geq1}E%
{\displaystyle\int\nolimits_{0}^{T}}
{\displaystyle\int\nolimits_{\mathbb{Q}_{p}^{N}}}
{\displaystyle\int\nolimits_{\mathbb{Q}_{p}^{N}}}
\left[
{\displaystyle\int\nolimits_{\mathbb{Q}_{p}^{N}}}
\Delta_{l}\left(  z\right)  Y\left(  t,z\right)  \Gamma\left(  t,z\right)
\delta_{k}\left(  x-z\right)  d^{N}z\right] \\
&  \times f\left(  x-y\right)  \left[
{\displaystyle\int\nolimits_{\mathbb{Q}_{p}^{N}}}
\Delta_{l}\left(  z^{\prime}\right)  Y\left(  t,z^{\prime}\right)
\Gamma\left(  t,z^{\prime}\right)  \delta_{k}\left(  y-z^{\prime}\right)
d^{N}z^{\prime}\right]  d^{N}xd^{N}ydt\\
&  \leq C_{Y}\sup_{k\geq1}%
{\displaystyle\int\nolimits_{0}^{T}}
{\displaystyle\int\nolimits_{\mathbb{Q}_{p}^{N}}}
{\displaystyle\int\nolimits_{\mathbb{Q}_{p}^{N}}}
\Gamma_{k}\left(  t,x\right)  f\left(  x-y\right)  \Gamma_{k}\left(
t,y\right)  d^{N}xd^{N}ydt\\
&  =C_{Y}\sup_{k\geq1}%
{\displaystyle\int\nolimits_{0}^{T}}
\left\vert \mathcal{F}\Gamma_{k}\left(  t\right)  \left(  \xi\right)
\right\vert ^{2}d\mu\left(  \xi\right)  dt\leq C_{Y}%
{\displaystyle\int\nolimits_{0}^{T}}
\left\vert \mathcal{F}\Gamma\left(  t\right)  \left(  \xi\right)  \right\vert
^{2}d\mu\left(  \xi\right)  dt,
\end{align*}

because $\mathcal{F}\Gamma_{k}\left(  t\right)  \left(  \xi\right)
=\mathcal{F}\Gamma\left(  t\right)  \left(  \xi\right)  \cdot\Omega\left(
p^{-k}\left\Vert \xi\right\Vert _{p}\right)  \leq\mathcal{F}\Gamma\left(
t\right)  \left(  \xi\right)  $. Therefore%
\begin{equation}
\sup_{k,l\geq1}E\left(  \left\Vert G_{k,l}\right\Vert _{U_{T}}^{2}\right)
\leq C_{Y}%
{\displaystyle\int\nolimits_{0}^{T}}
\left\vert \mathcal{F}\Gamma\left(  t\right)  \left(  \xi\right)  \right\vert
^{2}d\mu\left(  \xi\right)  dt<+\infty\text{ } \label{Bound}%
\end{equation}
for $t\in\left(  0,T\right]  $ a.s.

As a consequence, we get that $G_{k,l}\in L^{2}\left(  \Omega\times\left[
0,T\right]  ;U\right)  $ for $k,l\geq1$, since by (\ref{BoundGkl}),
$G_{k,l}(t)\in L^{2}\left(  \left[  0,T\right]  \right)  \otimes_{\text{alg}%
}\mathcal{D}_{\mathbb{R}}\left(  \mathbb{Q}_{p}^{N}\right)  $ a.s.

\textbf{§3. }$\lim_{k\rightarrow+\infty}G_{k,l}\in L^{2}\left(  \Omega
\times\left[  0,T\right]  ;U\right)  $.

We set $G_{l}(t):=\Delta_{l}Y\left(  t\right)  \Gamma\left(  t\right)  $,
$l\in\mathbb{N}$, $t\in\left(  0,T\right]  $. By using the reasoning given in
Paragraph 2, we get%
\begin{equation}
E%
{\displaystyle\int\nolimits_{0}^{T}}
{\displaystyle\int\nolimits_{\mathbb{Q}_{p}^{N}}}
\left\vert \mathcal{F}G_{l}\left(  t\right)  \left(  \xi\right)  \right\vert
^{2}d\mu\left(  \xi\right)  dt\leq C_{Y}%
{\displaystyle\int\nolimits_{0}^{T}}
{\displaystyle\int\nolimits_{\mathbb{Q}_{p}^{N}}}
\left\vert \mathcal{F}\Gamma\left(  t\right)  \left(  \xi\right)  \right\vert
^{2}d\mu\left(  \xi\right)  dt<+\infty, \label{Bound2}%
\end{equation}
for any $l\in\mathbb{N}$.

We now assert that $G_{k,l}$ $\underrightarrow{U_{T}}$ $G_{l}$ as
$k\rightarrow+\infty$. Indeed,
\begin{align*}
&  E%
{\displaystyle\int\nolimits_{0}^{T}}
{\displaystyle\int\nolimits_{\mathbb{Q}_{p}^{N}}}
\left\vert \mathcal{F}G_{k,l}\left(  t\right)  \left(  \xi\right)
-\mathcal{F}G_{l}\left(  t\right)  \left(  \xi\right)  \right\vert ^{2}%
d\mu\left(  \xi\right)  dt\\
&  =E%
{\displaystyle\int\nolimits_{0}^{T}}
{\displaystyle\int\nolimits_{\mathbb{Q}_{p}^{N}}}
\left\vert \mathcal{F}G_{l}\left(  t\right)  \left(  \xi\right)  \right\vert
^{2}\left\vert \Delta_{k}\left(  \xi\right)  -1\right\vert ^{2}d\mu\left(
\xi\right)  dt\rightarrow0\text{ as }k\rightarrow+\infty,
\end{align*}
by the Dominated Convergence Theorem and (\ref{Bound2}). Hence $G_{l}\in
L^{2}\left(  \Omega\times\left[  0,T\right]  ;U\right)  $ and by
(\ref{Bound2}),
\begin{equation}
\sup_{l\geq1}E\left(  \left\Vert G_{l}\right\Vert _{U_{T}}^{2}\right)  \leq
C_{Y}%
{\displaystyle\int\nolimits_{0}^{T}}
\left\vert \mathcal{F}\Gamma\left(  t\right)  \left(  \xi\right)  \right\vert
^{2}d\mu\left(  \xi\right)  dt. \label{SupEGl}%
\end{equation}

\textbf{§4. }$G_{l}$ $\underrightarrow{U_{T}}$ $G$, i.e. $\lim_{l\rightarrow
+\infty}E\left(  \left\Vert G-G_{l}\right\Vert _{U_{T}}^{2}\right)  =0$.

Indeed,
\begin{gather*}
E\left(  \left\Vert G-G_{l}\right\Vert _{U_{T}}^{2}\right)  =E%
{\displaystyle\int\nolimits_{0}^{T}}
{\displaystyle\int\nolimits_{\mathbb{Q}_{p}^{N}}}
\left\vert \mathcal{F}G\left(  t\right)  \left(  \xi\right)  -\mathcal{F}%
G_{l}\left(  t\right)  \left(  \xi\right)  \right\vert ^{2}d\mu\left(
\xi\right)  dt\\
\leq2E%
{\displaystyle\int\nolimits_{0}^{T}}
{\displaystyle\int\nolimits_{\mathbb{Q}_{p}^{N}}}
\left\vert \mathcal{F}G\left(  t\right)  \left(  \xi\right)  \right\vert
^{2}d\mu\left(  \xi\right)  dt+2E%
{\displaystyle\int\nolimits_{0}^{T}}
{\displaystyle\int\nolimits_{\mathbb{Q}_{p}^{N}}}
\left\vert \mathcal{F}G_{l}\left(  t\right)  \left(  \xi\right)  \right\vert
^{2}d\mu\left(  \xi\right)  dt\\
\leq4E%
{\displaystyle\int\nolimits_{0}^{T}}
{\displaystyle\int\nolimits_{\mathbb{Q}_{p}^{N}}}
\left\vert \mathcal{F}G\left(  t\right)  \left(  \xi\right)  \right\vert
^{2}d\mu\left(  \xi\right)  dt\\
\leq4C_{Y}%
{\displaystyle\int\nolimits_{0}^{T}}
{\displaystyle\int\nolimits_{\mathbb{Q}_{p}^{N}}}
\left\vert \mathcal{F}\Gamma\left(  t\right)  \left(  \xi\right)  \right\vert
^{2}d\mu\left(  \xi\right)  dt<+\infty,
\end{gather*}
the last inequality was obtained by using the reasoning given in Paragraph 2.
On the other hand,%
\[
E\left(  \left\Vert G-G_{l}\right\Vert _{U_{T}}^{2}\right)  =E%
{\displaystyle\int\nolimits_{0}^{T}}
{\displaystyle\int\nolimits_{\mathbb{Q}_{p}^{N}}}
\left\vert \mathcal{F}G\left(  t\right)  \left(  \xi\right)  -\mathcal{F}%
G\left(  t\right)  \left(  \xi\right)  \ast\delta_{l}\left(  \xi\right)
\right\vert ^{2}d\mu\left(  \xi\right)  dt.
\]
Now by using the Dominated Convergence Theorem and the fact that
\[
\lim_{l\rightarrow+\infty}\mathcal{F}G\left(  t\right)  \left(  \xi\right)
\ast\delta_{l}\left(  \xi\right)  =\mathcal{F}G\left(  t\right)  \left(
\xi\right)  \text{ almost everywhere,}%
\]
cf. \cite[Theorem 1.14]{Taibleson}, we get that $\lim_{l\rightarrow+\infty
}E\left(  \left\Vert G-G_{l}\right\Vert _{U_{T}}^{2}\right)  =0$, which
implies $G\in L^{2}\left(  \Omega\times\left[  0,T\right]  ;U\right)  $.

Moreover, we deduce that%
\begin{align*}
E\left(  \left\Vert G\right\Vert _{U_{T}}^{2}\right)   &  =E\left(
{\displaystyle\int\nolimits_{0}^{T}}
{\displaystyle\int\nolimits_{\mathbb{Q}_{p}^{N}}}
\left\vert \mathcal{F}G\left(  t\right)  \left(  \xi\right)  \right\vert
^{2}d\mu\left(  \xi\right)  dt\right)  =\lim_{l\rightarrow+\infty}E\left(
\left\Vert G_{l}\right\Vert _{U_{T}}^{2}\right) \\
&  \leq C_{Y}%
{\displaystyle\int\nolimits_{0}^{T}}
{\displaystyle\int\nolimits_{\mathbb{Q}_{p}^{N}}}
\left\vert \mathcal{F}\Gamma\left(  t\right)  \left(  \xi\right)  \right\vert
^{2}d\mu\left(  \xi\right)  dt,
\end{align*}

cf. (\ref{SupEGl}).

\textbf{§5. A bound for} $E\left(  \left\vert G\cdot W\right\vert ^{2}\right)
$.

The announced bound for $E\left(  \left\vert G\cdot W\right\vert ^{2}\right)
$ is obtained from (\ref{Eq5}) by using a reasoning similar to the one used in
Paragraph 2.
\end{proof}

\begin{remark}
\label{notaprocess}Let $Y$ be a process as in Proposition \ref{Prop2}.
Consider the processes of the form%
\[
\left\{  Y(t,x),\left(  t,x\right)  \in\left[  T_{0},T\right]  \times
\mathbb{Q}_{p}^{N}\right\}
\]
where $0\leq T_{0}<T$, then
\begin{equation}
E\left(  \left\vert G\cdot W\right\vert ^{2}\right)  \leq%
{\displaystyle\int\nolimits_{T_{0}}^{T}}
\left(  \sup_{x\in\mathbb{Q}_{p}^{N}}E\left(  \left\vert Y\left(  s,x\right)
\right\vert ^{2}\right)  \right)
{\displaystyle\int\nolimits_{\mathbb{Q}_{p}^{N}}}
\left\vert \mathcal{F}\left(  \Gamma\left(  s\right)  \left(  \xi\right)
\right)  \right\vert ^{2}d\mu\left(  \xi\right)  ds. \label{EGW_modified}%
\end{equation}

\end{remark}

\section{\label{Sect6}Stochastic Pseudodifferential Equations Driven by a
spatially homogeneous Noise}

In this section we introduce a new class of stochastic pseudodifferential
\ equations in $\mathbb{Q}_{p}^{N}$ driven by a spatially homogeneous noise,
more precisely, we study the following class of stochastic equations:%
\begin{equation}
\left\{
\begin{array}
[c]{l}%
\frac{\partial u}{\partial t}\left(  t,x\right)  +\boldsymbol{A}\left(
\partial,\beta\right)  u\left(  t,x\right)  =\sigma\left(  u\left(
t,x\right)  \right)  \overset{\cdot}{W}\left(  t,x\right)  +b\left(  u\left(
t,x\right)  \right) \\
\\
u\left(  0,x\right)  =u_{0}\left(  x\right)  ,\text{ }t\geq0,x\in
\mathbb{Q}_{p}^{N},
\end{array}
\right.  \label{IVP}%
\end{equation}
where the coefficients $\sigma$\ and $b$\ are real-valued functions and
$\overset{\cdot}{W}\left(  t,x\right)  $ is the formal notation for the
Gaussian random perturbation described in Section \ref{SectionNoise}.

Recall that we are working with a filtered probability space $\left(
\Omega,\mathcal{F},\left(  \mathcal{F}_{t}\right)  ,P\right)  $, where
$\left(  \mathcal{F}_{t}\right)  _{t}$ is a filtration generated by the
standard cylindrical Wiener process of Proposition \ref{Prop1}. We fix a time
horizon $T>0$.

\begin{definition}
A real-valued adapted stochastic process
\[
\left\{  u\left(  t,x\right)  ,\left(  t,x\right)  \in\left[  0,T\right]
\times\mathbb{Q}_{p}^{N}\right\}
\]
is \textit{a mild random field solution of} (\ref{IVP}), if the following
stochastic integral equation is satisfied:%
\begin{align}
u\left(  t,x\right)   &  =\left(  \Gamma\left(  t\right)  \ast u_{0}\right)
\left(  x\right)  +%
{\displaystyle\int\nolimits_{0}^{t}}
{\displaystyle\int\nolimits_{\mathbb{Q}_{p}^{N}}}
\Gamma\left(  t-s,x-y\right)  \sigma\left(  u\left(  s,y\right)  \right)
W\left(  ds,d^{N}y\right) \nonumber\\
&  +%
{\displaystyle\int\nolimits_{0}^{t}}
ds%
{\displaystyle\int\nolimits_{\mathbb{Q}_{p}^{N}}}
\Gamma\left(  s,y\right)  b\left(  u\left(  t-s,x-y\right)  \right)
d^{N}y,\text{ a.s.,} \label{SolutionIVP}%
\end{align}
for all $\left(  t,x\right)  \in\left[  0,T\right]  \times\mathbb{Q}_{p}^{N}$.
\end{definition}

The stochastic integral on the right-hand side of (\ref{SolutionIVP}) is \ as
defined in Remark \ref{StochasticIntegrals}. In particular, we need to assume
that for any $\left(  t,x\right)  $ the fundamental solution $\Gamma\left(
t-\cdot,x-\star\right)  $ satisfies Hypothesis A, and to require that
\[
s\rightarrow\Gamma\left(  t-s,x-\star\right)  \sigma\left(  u\left(
s,\star\right)  \right)  \text{, for }s\in\left[  0,t\right]  ,
\]
defines a predictable process taking values in the space $U$ such that%
\[
E\left(
{\displaystyle\int\nolimits_{0}^{t}}
\left\Vert \Gamma\left(  t-s,x-\star\right)  \sigma\left(  u\left(
s,\star\right)  \right)  \right\Vert _{U}^{2}ds\right)  <+\infty,
\]
see Section \ref{Integrands}. These assumptions will be satisfied by imposing
that $b$\ and $\sigma$ are Lipschitz continuous functions (see Theorem
\ref{Thm2}). The last integral on the right-hand side of (\ref{SolutionIVP})
is considered in the pathwise sense.

The aim of this section is to prove the existence and uniqueness of a mild
random field solution for stochastic integral equation (\ref{SolutionIVP}). We
are interested in solutions that are $L^{2}\left(  \Omega\right)  $-bounded
and $L^{2}\left(  \Omega\right)  $-continuous.

\begin{lemma}
\label{lemma5}Assume that $u_{0}:\mathbb{Q}_{p}^{N}\rightarrow\mathbb{R}$ is
measurable and bounded. Then
\[
\left(  t,x\right)  \rightarrow I_{0}\left(  t,x\right)  :=\left(
\Gamma\left(  t\right)  \ast u_{0}\right)  \left(  x\right)
\]
is continuous and $\sup_{\left(  t,x\right)  \in\left[  0,T\right]
\times\mathbb{Q}_{p}^{N}}\left\vert I_{0}\left(  t,x\right)  \right\vert
<+\infty$.
\end{lemma}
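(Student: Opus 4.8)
The plan is to read off both assertions from the heat-kernel properties collected in Theorem \ref{Thm1B}, reducing the continuity claim to an $L^1$-continuity statement for the map $(t,h)\mapsto\Gamma(t,\cdot+h)$. The uniform bound is immediate: for $t>0$, nonnegativity and unit mass of $\Gamma(t,\cdot)$ (Theorem \ref{Thm1B}(i)--(ii)) give
\[
|I_0(t,x)|\le\|u_0\|_{L^\infty}\int_{\mathbb{Q}_p^N}\Gamma(t,x-y)\,d^N y=\|u_0\|_{L^\infty},
\]
and at $t=0$ one has $I_0(0,x)=u_0(x)$, so $\sup_{[0,T]\times\mathbb{Q}_p^N}|I_0|\le\|u_0\|_{L^\infty}<+\infty$.

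For continuity, fix $(t_0,x_0)$ with $t_0>0$. Using $\int\Gamma(t,\cdot)=1$, the change of variable $z=x-y$, and writing $h:=x_0-x$, I would estimate for $t>0$
\[
|I_0(t,x)-I_0(t_0,x_0)|\le\|u_0\|_{L^\infty}\int_{\mathbb{Q}_p^N}\bigl|\Gamma(t,z)-\Gamma(t_0,z+h)\bigr|\,d^N z,
\]
and then decouple the right-hand side by the triangle inequality into a pure time term $\int|\Gamma(t,z)-\Gamma(t_0,z)|\,d^N z$ and a pure translation term $\int|\Gamma(t_0,z)-\Gamma(t_0,z+h)|\,d^N z$. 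It then suffices to show each tends to $0$ as $t\to t_0$ and $h\to0$, respectively.

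The translation term is handled by the standard $p$-adic argument: since $\Gamma(t_0,\cdot)\in L^1$ (Theorem \ref{Thm1B}(iii)) and $\mathcal{D}(\mathbb{Q}_p^N)$ is dense in $L^1$, I approximate $\Gamma(t_0,\cdot)$ in $L^1$ by a test function $\phi$; local constancy gives $\phi(\cdot+h)=\phi$ once $h\in B_l^N$, so translation invariance of the Haar measure yields $\|\Gamma(t_0,\cdot+h)-\Gamma(t_0,\cdot)\|_{L^1}\to0$. The time term is the analytic core. From (\ref{heatkernel}) and ellipticity (\ref{elliptic}), dominated convergence in the defining Fourier integral gives $\Gamma(t,z)\to\Gamma(t_0,z)$ pointwise in $z$, and I would upgrade this to $L^1$-convergence by Dominated Convergence using the bound of Theorem \ref{Thm1B}(vi): for $t\in[t_0/2,2t_0]$,
\[
\Gamma(t,z)\le 2A\,t_0\,\bigl((t_0/2)^{1/(d\beta)}+\|z\|_p\bigr)^{-d\beta-N}=:g(z),
\]
where $g\in L^1$ because $d\beta>0$ (on $\|z\|_p=p^k$ the integrand contributes $p^{Nk}p^{-k(d\beta+N)}=p^{-kd\beta}$, summable for $k\ge0$). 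Hence $|\Gamma(t,\cdot)-\Gamma(t_0,\cdot)|\le2g\in L^1$, and $\|\Gamma(t,\cdot)-\Gamma(t_0,\cdot)\|_{L^1}\to0$ as $t\to t_0$.

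Combining the two terms gives joint continuity of $I_0$ at every point with $t_0>0$, hence on $(0,T]\times\mathbb{Q}_p^N$. The main obstacle is the time term: continuity in $t$ does not follow from $\Gamma(t,\cdot)\in L^1$ alone but requires a $t$-uniform integrable majorant, which is exactly what the sharp kernel estimate (vi) supplies. Finally, I note that continuity cannot in general reach the edge $t=0$, since $I_0(0,\cdot)=u_0$ is merely measurable; it holds there precisely at the continuity points of $u_0$, while the uniform bound above is valid on the entire strip $[0,T]\times\mathbb{Q}_p^N$.
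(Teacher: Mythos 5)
Your uniform bound and your proof of joint continuity on $(0,T]\times\mathbb{Q}_p^N$ are correct and are essentially the paper's own argument written out in full: the paper obtains the sup bound exactly as you do, and for continuity at a point with $t_0>0$ it says only that the claim ``follows by the Dominated Convergence Theorem and Theorem \ref{Thm1B} (vi).'' Your splitting into a pure time term and a pure translation term, the $L^1$-continuity of translation via density of $\mathcal{D}(\mathbb{Q}_p^N)$ in $L^1$ together with local constancy, and the $t$-uniform integrable majorant $g$ extracted from the kernel estimate of Theorem \ref{Thm1B} (vi) are precisely the details that make that one-line citation work; nothing is missing in that part.

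The only divergence is at the boundary $t=0$. The lemma asserts continuity on all of $[0,T]\times\mathbb{Q}_p^N$, and the paper's proof handles the edge by simply declaring that $\lim_{(t,x)\rightarrow(0,x_0)}I_0(t,x)=u_0(x_0)$. You decline to prove this, and you are right to: restricting to the slice $t=0$ gives $I_0(0,\cdot)=u_0$, so continuity of $I_0$ on the closed strip would already force $u_0$ to be continuous, which is not assumed; moreover, even approaching along $t>0$, the approximate identity $\Gamma(t,\cdot)$ drives $I_0(t,x_0)$ to a kernel-weighted average of $u_0$ near $x_0$, which differs from $u_0(x_0)$ when, say, $u_0=1_E$ with $E$ of density $\tfrac12$ at $x_0$. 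So your proof establishes everything the paper's proof actually justifies, and your caveat identifies a genuine defect in the lemma as stated (one should either assume $u_0$ continuous or restrict the continuity claim to $t>0$). Note that the explicit use of this lemma in the proof of Theorem \ref{Thm2} is the uniform second-moment bound (\ref{EqC}) for $u^{0}$, which your argument fully delivers.
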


\begin{proof}
Notice that
\begin{equation}
\left\vert I_{0}\left(  t,x\right)  \right\vert \leq\left\{
\begin{array}
[c]{lll}%
\left\Vert u_{0}\right\Vert _{L^{\infty}}\left\Vert \Gamma\left(  t\right)
\right\Vert _{L^{1}} & \text{for} & t>0\\
&  & \\
\left\Vert u_{0}\right\Vert _{L^{\infty}} & \text{for} & t=0,
\end{array}
\right.  \label{EqA}%
\end{equation}
and
\begin{equation}
\sup_{\left(  t,x\right)  \in\left(  0,T\right]  \times\mathbb{Q}_{p}^{N}%
}\left\vert I_{0}\left(  t,x\right)  \right\vert \leq\left\Vert u_{0}%
\right\Vert _{L^{\infty}}\sup_{\left(  t,x\right)  \in\left(  0,T\right]
\times\mathbb{Q}_{p}^{N}}\left\Vert \Gamma\left(  t\right)  \right\Vert
_{L^{1}}=\left\Vert u_{0}\right\Vert _{L^{\infty}}. \label{EqB}%
\end{equation}
By combining (\ref{EqA})-(\ref{EqB}), we get $\sup_{\left(  t,x\right)
\in\left[  0,T\right]  \times\mathbb{Q}_{p}^{N}}\left\vert I_{0}\left(
t,x\right)  \right\vert \leq\left\Vert u_{0}\right\Vert _{L^{\infty}}$.

The continuity of $I_{0}\left(  t,x\right)  $ at a point of the form $\left(
t_{0},x_{0}\right)  $, with $t_{0}>0$, follows by the Dominated Convergence
Theorem and Theorem \ref{Thm1B} (vi). The continuity of $I_{0}\left(
t,x\right)  $ at $\left(  0,x_{0}\right)  $ is a consequence of the fact that
$\lim_{\left(  t,x\right)  \rightarrow\left(  0,x_{0}\right)  }I_{0}\left(
t,x\right)  =u_{0}\left(  x_{0}\right)  =I_{0}\left(  0,x_{0}\right)  $.
\end{proof}

\textbf{Hypothesis B. }Let $\Gamma$ be the fundamental solution of
(\ref{CauchyProblem}) as before. We assume that
\begin{equation}
\lim_{h\rightarrow0^{+}}%
{\displaystyle\int\nolimits_{0}^{T}}
{\displaystyle\int\nolimits_{\mathbb{Q}_{p}^{N}}}
\sup_{\left\vert r-t\right\vert <h}\left\vert \mathcal{F}\Gamma\left(
r\right)  \left(  \xi\right)  -\mathcal{F}\Gamma\left(  t\right)  \left(
\xi\right)  \right\vert ^{2}d\mu\left(  \xi\right)  dt=0. \label{LimF}%
\end{equation}

\begin{lemma}
\label{lemma5A}If $\int_{\mathbb{Q}_{p}^{N}}\left\Vert \xi\right\Vert
_{p}^{d\beta}d\mu\left(  \xi\right)  <+\infty$, then Hypothesis B holds.
Furthermore, the condition $\int_{\mathbb{Q}_{p}^{N}}\left\Vert \xi\right\Vert
_{p}^{d\beta}d\mu\left(  \xi\right)  <+\infty$ also implies (\ref{condition1}).
\end{lemma}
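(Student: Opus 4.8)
The plan is to reduce everything to the explicit identity $\mathcal{F}\Gamma(t)(\xi)=e^{-t|a(\xi)|_{p}^{\beta}}$, which holds because $\Gamma(t,x)=\mathcal{F}_{\xi\rightarrow x}^{-1}(e^{-t|a(\xi)|_{p}^{\beta}})$ and $\mathcal{F}\mathcal{F}^{-1}=\mathrm{id}$. Write $\lambda(\xi):=|a(\xi)|_{p}^{\beta}\geq 0$. By the ellipticity estimate (\ref{elliptic}) one has $C_{0}^{\beta}\|\xi\|_{p}^{d\beta}\leq\lambda(\xi)\leq C_{1}^{\beta}\|\xi\|_{p}^{d\beta}$, so the hypothesis $\int_{\mathbb{Q}_{p}^{N}}\|\xi\|_{p}^{d\beta}\,d\mu(\xi)<+\infty$ is equivalent to $\int_{\mathbb{Q}_{p}^{N}}\lambda(\xi)\,d\mu(\xi)<+\infty$, and I shall use the latter form throughout.

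For Hypothesis B the integrand is $F_{h}(t,\xi):=\sup_{|r-t|<h,\,r\geq 0}|e^{-r\lambda(\xi)}-e^{-t\lambda(\xi)}|^{2}$, and the idea is to apply the Dominated Convergence Theorem on $[0,T]\times\mathbb{Q}_{p}^{N}$ with respect to $dt\,d\mu(\xi)$. Since $x\mapsto e^{-x}$ is $1$-Lipschitz on $[0,\infty)$, for $r,t\geq 0$ one has $|e^{-r\lambda}-e^{-t\lambda}|\leq\min(1,|r-t|\lambda)$; hence for every $h\leq 1$ and every admissible $r$ with $|r-t|<h$ we obtain the uniform bound $F_{h}(t,\xi)\leq\min(1,\lambda(\xi))^{2}=\min(1,\lambda(\xi)^{2})$. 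The crucial point is that this dominating function is integrable: the elementary inequality $\min(1,\lambda^{2})\leq\lambda$, valid for all $\lambda\geq 0$, gives $\int_{0}^{T}\int_{\mathbb{Q}_{p}^{N}}\min(1,\lambda(\xi)^{2})\,d\mu(\xi)\,dt=T\int_{\mathbb{Q}_{p}^{N}}\min(1,\lambda(\xi)^{2})\,d\mu(\xi)\leq T\int_{\mathbb{Q}_{p}^{N}}\lambda(\xi)\,d\mu(\xi)<+\infty$. For each fixed $(t,\xi)$ the continuity of $r\mapsto e^{-r\lambda(\xi)}$ yields $F_{h}(t,\xi)\to 0$ as $h\to 0^{+}$, the supremum over the shrinking interval tending to the value at $r=t$, which is $0$; measurability of $F_{h}$ is clear since the supremum may be taken over rational $r$. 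The Dominated Convergence Theorem then gives (\ref{LimF}), that is, Hypothesis B.

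For the ``furthermore'' assertion it suffices, since the text already records that (\ref{condition1}) is equivalent to (\ref{condition1A}), to verify $\int_{\mathbb{Q}_{p}^{N}}\frac{d\mu(\xi)}{\max(1,\|\xi\|_{p})^{d\beta}}<+\infty$. Here I would split $\mathbb{Q}_{p}^{N}=\mathbb{Z}_{p}^{N}\cup(\mathbb{Q}_{p}^{N}\setminus\mathbb{Z}_{p}^{N})$. On the compact ball $\mathbb{Z}_{p}^{N}$ the integrand equals $1$, and since $\mu$ is a regular Borel measure it is finite on compact sets, so this part contributes $\mu(\mathbb{Z}_{p}^{N})<+\infty$; on the complement one has $\max(1,\|\xi\|_{p})^{-d\beta}=\|\xi\|_{p}^{-d\beta}\leq\|\xi\|_{p}^{d\beta}$ because $\|\xi\|_{p}>1$ there, so this part is bounded by $\int_{\mathbb{Q}_{p}^{N}}\|\xi\|_{p}^{d\beta}\,d\mu(\xi)<+\infty$. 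Adding the two contributions yields (\ref{condition1A}), hence (\ref{condition1}).

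The computations are all elementary; the only genuine point requiring care is the uniform-in-$h$ domination needed to invoke the Dominated Convergence Theorem, together with the observation that the local finiteness of $\mu$ near the origin, a consequence of its regularity and the compactness of $\mathbb{Z}_{p}^{N}$, is exactly what controls the low-frequency part in the second assertion. I do not expect any serious obstacle beyond assembling these pieces.
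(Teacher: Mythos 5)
Your proof is correct, but it takes a genuinely different (and more economical) route than the paper's for the Hypothesis B part. The paper applies the Mean Value Theorem to get the bound $h^{2}\left\vert a(\xi)\right\vert _{p}^{2\beta}e^{-2(t-h)\left\vert a(\xi)\right\vert _{p}^{\beta}}$ and then must justify, via Fubini and a dominated-convergence argument in $h$, that the $h$-shifted integral (\ref{formula1}) stays bounded as $h\rightarrow0^{+}$; this forces it to handle integrals of $\Vert\xi\Vert_{p}^{2d\beta}e^{-2C_{0}^{\beta}(t-h)\Vert\xi\Vert_{p}^{d\beta}}$, where the factor $e^{2C_{0}^{\beta}h\Vert\xi\Vert_{p}^{d\beta}}$ makes the domination step delicate for measures with unbounded support. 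Your bound $\sup_{|r-t|<h}|e^{-r\lambda}-e^{-t\lambda}|^{2}\leq\min(1,h\lambda)^{2}\leq\min(1,\lambda^{2})\leq\lambda$ sidesteps this entirely: the dominating function $\lambda(\xi)\leq C_{1}^{\beta}\Vert\xi\Vert_{p}^{d\beta}$ is $h$-independent and integrable directly from the hypothesis, and a single application of dominated convergence gives (\ref{LimF}). What you lose is only the quantitative rate $O(h^{2})$ that the paper's estimate exhibits (and does not use). For the ``furthermore'' assertion the paper merely invokes the equivalence of (\ref{condition1}) and (\ref{condition1A}) without detail; your splitting into $\mathbb{Z}_{p}^{N}$, where $\mu$ is finite because $\Omega(\Vert\cdot\Vert_{p})$ is a test function and $\mu$ is a Radon measure, and its complement, where $\max(1,\Vert\xi\Vert_{p})^{-d\beta}\leq\Vert\xi\Vert_{p}^{d\beta}$, is exactly the missing verification, so you have in fact supplied more than the paper does on this point.
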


\begin{proof}
By applying the Mean Value Theorem to the function $e^{-t\left\vert a\left(
\xi\right)  \right\vert _{p}^{\beta}}$, we have%
\begin{align*}
\sup_{\left\vert r-t\right\vert <h}\left\vert \mathcal{F}\Gamma\left(
r\right)  \left(  \xi\right)  -\mathcal{F}\Gamma\left(  t\right)  \left(
\xi\right)  \right\vert ^{2}  &  \leq h^{2}\left\vert a\left(  \xi\right)
\right\vert _{p}^{2\beta}e^{-2\left(  t-h\right)  \left\vert a\left(
\xi\right)  \right\vert _{p}^{\beta}}\\
&  \leq C_{1}^{2\beta}h^{2}\left\Vert \xi\right\Vert _{p}^{2d\beta}%
e^{-2C_{0}^{\beta}\left(  t-h\right)  \left\Vert \xi\right\Vert _{p}^{d\beta}%
},
\end{align*}
cf. (\ref{elliptic}), and thus
\begin{align*}
&
{\displaystyle\int\nolimits_{0}^{T}}
{\displaystyle\int\nolimits_{\mathbb{Q}_{p}^{N}}}
\sup_{\left\vert r-t\right\vert <h}\left\vert \mathcal{F}\Gamma\left(
r\right)  \left(  \xi\right)  -\mathcal{F}\Gamma\left(  t\right)  \left(
\xi\right)  \right\vert ^{2}d\mu\left(  \xi\right)  dt\\
&  \leq h^{2}%
{\displaystyle\int\nolimits_{0}^{T}}
{\displaystyle\int\nolimits_{\mathbb{Q}_{p}^{N}}}
\left\Vert \xi\right\Vert _{p}^{2d\beta}e^{-2C_{0}^{\beta}\left(  t-h\right)
\left\Vert \xi\right\Vert _{p}^{d\beta}}d\mu\left(  \xi\right)  dt.
\end{align*}
In order to prove the result, it is sufficient to show that%
\begin{equation}
\lim_{h\rightarrow0^{+}}%
{\displaystyle\int\nolimits_{0}^{T}}
{\displaystyle\int\nolimits_{\mathbb{Q}_{p}^{N}}}
\left\Vert \xi\right\Vert _{p}^{2d\beta}e^{-2C_{0}^{\beta}\left(  t-h\right)
\left\Vert \xi\right\Vert _{p}^{d\beta}}d\mu\left(  \xi\right)  dt<+\infty.
\label{formula1}%
\end{equation}

Now, if
\begin{equation}%
{\displaystyle\int\nolimits_{0}^{T}}
{\displaystyle\int\nolimits_{\mathbb{Q}_{p}^{N}}}
\left\Vert \xi\right\Vert _{p}^{2d\beta}e^{-2C_{0}^{\beta}t\left\Vert
\xi\right\Vert _{p}^{d\beta}}d\mu\left(  \xi\right)  dt<+\infty,
\label{formula2}%
\end{equation}
then (\ref{formula1}) follows by applying the Dominated Convergence Theorem.
On the other hand, it is easy to check that $\int_{\mathbb{Q}_{p}^{N}%
}\left\Vert \xi\right\Vert _{p}^{d\beta}d\mu\left(  \xi\right)  <+\infty$
implies
\begin{equation}%
{\displaystyle\int\nolimits_{\mathbb{Q}_{p}^{N}}}
\left\Vert \xi\right\Vert _{p}^{2d\beta}%
{\displaystyle\int\nolimits_{0}^{T}}
e^{-2C_{0}^{\beta}t\left\Vert \xi\right\Vert _{p}^{d\beta}}dtd\mu\left(
\xi\right)  <+\infty, \label{formula3}%
\end{equation}
now (\ref{formula2}) follows \ from (\ref{formula3}) by using the Fubini
Theorem. The last assertion in the statement follows from the fact that
(\ref{condition1}) is equivalent to (\ref{condition1A}).
\end{proof}

\begin{theorem}
\label{Thm2}Assume that $b$, $\sigma$ are Lipschitz continuous functions,
$u_{0}$ is measurable and bounded function, and that
\[%
{\displaystyle\int\nolimits_{\mathbb{Q}_{p}^{N}}}
\left\Vert \xi\right\Vert _{p}^{d\beta}d\mu\left(  \xi\right)  <+\infty.
\]
Then, there exists a unique mild random field solution $\left\{  u\left(
t,x\right)  ,\left(  t,x\right)  \in\left[  0,T\right]  \times\mathbb{Q}%
_{p}^{N}\right\}  $ of (\ref{SolutionIVP}). Moreover, $u$ is $L^{2}\left(
\Omega\right)  $-continuous and
\begin{equation}
\sup_{\left(  t,x\right)  \in\left[  0,T\right]  \times\mathbb{Q}_{p}^{N}%
}E\left(  \left\vert u\left(  t,x\right)  \right\vert ^{2}\right)  <+\infty.
\label{EqExpe}%
\end{equation}

\end{theorem}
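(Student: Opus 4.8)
The plan is to construct the solution by a Picard iteration scheme and to control the successive approximations by the isometry-type bound of Proposition \ref{Prop2}. First I would observe that the moment hypothesis $\int_{\mathbb{Q}_p^N}\left\Vert \xi\right\Vert _{p}^{d\beta}\,d\mu(\xi)<+\infty$ is precisely the assumption of Lemma \ref{lemma5A}, so Hypothesis B holds and condition (\ref{condition1}) is satisfied; consequently, for every $(t,x)$ the kernel $\Gamma(t-\cdot,x-\star)$ fulfills Hypothesis A, the stochastic integral in (\ref{SolutionIVP}) is well defined, and Proposition \ref{Prop2} applies. I would also record that Lipschitz continuity of $\sigma$ and $b$ yields the linear growth bounds $\left\vert\sigma(a)\right\vert^{2}\leq c(1+\left\vert a\right\vert^{2})$ and $\left\vert b(a)\right\vert^{2}\leq c(1+\left\vert a\right\vert^{2})$, and I would abbreviate the kernel $J(r):=\int_{\mathbb{Q}_p^N}\left\vert\mathcal{F}\Gamma(r)(\xi)\right\vert^{2}\,d\mu(\xi)$, which by (\ref{condition1}) satisfies $\int_{0}^{T}J(r)\,dr<+\infty$.

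Next I would set up the iteration: put $u^{(0)}(t,x):=I_{0}(t,x)=(\Gamma(t)\ast u_{0})(x)$ and define
\[
u^{(n+1)}(t,x)=I_{0}(t,x)+\int_{0}^{t}\!\!\int_{\mathbb{Q}_p^N}\Gamma(t-s,x-y)\sigma(u^{(n)}(s,y))\,W(ds,d^{N}y)+\int_{0}^{t}ds\!\int_{\mathbb{Q}_p^N}\Gamma(s,y)b(u^{(n)}(t-s,x-y))\,d^{N}y.
\]
At each stage I would verify predictability and that $H_{n}(t):=\sup_{x}E(\left\vert u^{(n)}(t,x)\right\vert^{2})$ is finite by induction: the initial term is bounded by Lemma \ref{lemma5}; the stochastic term is controlled by the key inequality (\ref{Key_inequality}) with integrand $Y=\sigma(u^{(n)})$; and the pathwise drift term is controlled by Cauchy--Schwarz together with $\Gamma(s,\mathbb{Q}_p^N)=1$ (Theorem \ref{Thm1B}(ii)), the measure $\Gamma(s,y)\,d^{N}y$ being a probability measure. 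Invoking linear growth then yields a recursive inequality $H_{n+1}(t)\leq c_{1}+c_{2}\int_{0}^{t}(1+J(t-s))H_{n}(s)\,ds$; since $1+J\in L^{1}([0,T])$, the Gronwall-type lemma for convolution kernels (as in \cite{Dalang}) gives $\sup_{n}\sup_{(t,x)}E(\left\vert u^{(n)}(t,x)\right\vert^{2})<+\infty$.

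Then I would prove convergence and uniqueness. Replacing the linear growth bounds by the Lipschitz bounds, the increments $D_{n}(t):=\sup_{x}E(\left\vert u^{(n+1)}(t,x)-u^{(n)}(t,x)\right\vert^{2})$ satisfy $D_{n+1}(t)\leq c\int_{0}^{t}(1+J(t-s))D_{n}(s)\,ds$; the same convolution Gronwall lemma shows $\sum_{n}\sup_{t\leq T}D_{n}(t)^{1/2}<+\infty$, so $\{u^{(n)}\}$ is Cauchy uniformly in $(t,x)$ in $L^{2}(\Omega)$ and converges to a limit $u$ inheriting the uniform bound (\ref{EqExpe}) and adaptedness. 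Passing to the limit (the stochastic term converges by the isometry, the drift by dominated convergence) shows that $u$ solves (\ref{SolutionIVP}), and uniqueness follows from the identical estimate applied to the difference of two solutions, which satisfies $\Delta(t)\leq c\int_{0}^{t}(1+J(t-s))\Delta(s)\,ds$ with $\Delta\in L^{\infty}([0,T])$, forcing $\Delta\equiv 0$.

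Finally, for the $L^{2}(\Omega)$-continuity I would estimate $E(\left\vert u(t,x)-u(t',x')\right\vert^{2})$ term by term: the initial term is continuous by Lemma \ref{lemma5}, the time increment of the stochastic term is governed exactly by Hypothesis B (\ref{LimF}), since $\int_{0}^{T}\int_{\mathbb{Q}_p^N}\sup_{\left\vert r-t\right\vert<h}\left\vert\mathcal{F}\Gamma(r)(\xi)-\mathcal{F}\Gamma(t)(\xi)\right\vert^{2}\,d\mu(\xi)\,dt\to0$ bounds the $L^{2}(\Omega)$-modulus of continuity of the stochastic integral, and the drift term is handled by continuity of the deterministic convolution; the spatial increments are treated using the translation structure together with the local constancy of test functions in the non-Archimedean setting. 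I expect this last step---the uniform control of the stochastic term through Hypothesis B and the handling of the $p$-adic spatial increments---to be the main obstacle, the boundedness, convergence, and uniqueness steps being routine once the integrable kernel $J$ and Proposition \ref{Prop2} are in hand.
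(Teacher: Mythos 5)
Your proposal is correct and follows essentially the same route as the paper: the same Picard iteration, the same use of Proposition \ref{Prop2} and the Cauchy--Schwarz bound for the drift term, the convolution form of Gronwall's lemma from \cite[Lemma 15]{Dalang} for both the uniform moment bound and the convergence of the scheme, and Hypotheses A and B (secured via Lemma \ref{lemma5A}) for the $L^{2}(\Omega)$-continuity as in \cite[Lemma 19]{Dalang}. No substantive differences to report.
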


\begin{proof}
The proof involves similar techniques and ideas to those of \cite{Dalang},
\cite{Dalang-Quer}, \cite{M-S}.\ We use the following Picard iteration scheme:%
\begin{equation}
u^{0}\left(  t,x\right)  =I_{0}\left(  t,x\right)  , \label{U_0}%
\end{equation}%
\begin{align}
u^{n+1}(t,x)  &  =u^{0}\left(  t,x\right)  +%
{\displaystyle\int\nolimits_{0}^{t}}
{\displaystyle\int\nolimits_{\mathbb{Q}_{p}^{N}}}
\Gamma\left(  t-s,x-y\right)  \sigma\left(  u^{n}\left(  s,y\right)  \right)
W\left(  ds,d^{N}y\right) \nonumber\\
&  +%
{\displaystyle\int\nolimits_{0}^{t}}
{\displaystyle\int\nolimits_{\mathbb{Q}_{p}^{N}}}
b\left(  u^{n}\left(  t-s,x-y\right)  \right)  \Gamma\left(  s,y\right)
d^{N}yds\nonumber\\
&  =:u^{0}\left(  t,x\right)  +\mathcal{I}^{n}\left(  t,x\right)
+\mathcal{J}^{n}\left(  t,x\right)  , \label{U_n+1}%
\end{align}

for $n\in\mathbb{N}$.

The proof will be accomplished through several steps.

\textbf{§1. }$u^{n}\left(  t,x\right)  $ is a well-defined measurable process.

We prove by induction on $n$ that $\left\{  u^{n}\left(  t,x\right)  ,\left(
t,x\right)  \in\left[  0,T\right]  \times\mathbb{Q}_{p}^{N}\right\}  $ is a
well-defined\ measurable process satisfying
\begin{equation}
\sup_{\left(  t,x\right)  \in\left[  0,T\right]  \times\mathbb{Q}_{p}^{N}%
}E\left(  \left\vert u^{n}\left(  t,x\right)  \right\vert ^{2}\right)
<+\infty, \label{EqC}%
\end{equation}
for $n\in\mathbb{N}$. By Lemma \ref{lemma5}, $u^{0}\left(  t,x\right)  $
satisfies (\ref{EqC}), and the Lipschitz property of $\sigma$\ implies that
\[
\sup_{\left(  t,x\right)  \in\left[  0,T\right]  \times\mathbb{Q}_{p}^{N}%
}\sigma\left(  u^{0}\left(  t,x\right)  \right)  <+\infty.
\]
By Proposition \ref{Prop2}, the stochastic integral
\[
\mathcal{I}^{0}\left(  t,x\right)  =%
{\displaystyle\int\nolimits_{0}^{t}}
{\displaystyle\int\nolimits_{\mathbb{Q}_{p}^{N}}}
\Gamma\left(  t-s,x-y\right)  \sigma\left(  u^{0}\left(  s,y\right)  \right)
W\left(  ds,d^{N}y\right)
\]
is well-defined and
\begin{align}
E\left(  \left\vert \mathcal{I}^{0}\left(  t,x\right)  \right\vert
^{2}\right)   &  \leq C%
{\displaystyle\int\nolimits_{0}^{t}}
\sup_{z\in\mathbb{Q}_{p}^{N}}\left(  1+\left\vert u^{0}\left(  s,z\right)
\right\vert ^{2}\right)
{\displaystyle\int\nolimits_{\mathbb{Q}_{p}^{N}}}
\left\vert \mathcal{F}\Gamma\left(  t-s\right)  \left(  \xi\right)
\right\vert ^{2}d\mu\left(  \xi\right)  ds\nonumber\\
&  \leq C\sup_{\left(  s,z\right)  \in\left[  0,T\right]  \times\mathbb{Q}%
_{p}^{N}}\left(  1+\left\vert u^{0}\left(  s,z\right)  \right\vert
^{2}\right)
{\displaystyle\int\nolimits_{0}^{T}}
J\left(  s\right)  ds, \label{EqD}%
\end{align}
where%
\[
J\left(  s\right)  =%
{\displaystyle\int\nolimits_{\mathbb{Q}_{p}^{N}}}
\left\vert \mathcal{F}\Gamma\left(  s\right)  \left(  \xi\right)  \right\vert
^{2}d\mu\left(  \xi\right)  .
\]
We now consider the pathwise integral%
\[
\mathcal{J}^{0}\left(  t,x\right)  =%
{\displaystyle\int\nolimits_{0}^{t}}
{\displaystyle\int\nolimits_{\mathbb{Q}_{p}^{N}}}
b\left(  u^{0}\left(  t-s,x-y\right)  \right)  \Gamma\left(  s,y\right)
d^{N}yds.
\]
By applying the Cauchy-Schwartz inequality with respect to the finite measure
$\Gamma\left(  s,y\right)  d^{N}yds$ on $\left[  0,T\right]  \times
\mathbb{Q}_{p}^{N}$ and by using the Lipschitz property of $b$, one gets%
\begin{equation}
\left\vert \mathcal{J}^{0}\left(  t,x\right)  \right\vert ^{2}\leq C%
{\displaystyle\int\nolimits_{0}^{t}}
{\displaystyle\int\nolimits_{\mathbb{Q}_{p}^{N}}}
\left(  1+\left\vert u^{0}\left(  t-s,x-y\right)  \right\vert ^{2}\right)
\Gamma\left(  s,y\right)  d^{N}yds, \label{EqE}%
\end{equation}
which is uniformly bounded with respect to $t$ and $x$. This fact together
with (\ref{EqD}) imply that $\left\{  u^{1}\left(  t,x\right)  ,\left(
t,x\right)  \in\left[  0,T\right]  \times\mathbb{Q}_{p}^{N}\right\}  $ is a
well-defined measurable process, cf. Proposition \ref{Prop2}. In addition , by
(\ref{EqD})-(\ref{EqE}) and Hypothesis A,
\[
\sup_{\left(  t,x\right)  \in\left[  0,T\right]  \times\mathbb{Q}_{p}^{N}%
}E\left(  \left\vert u^{1}\left(  t,x\right)  \right\vert ^{2}\right)
<+\infty.
\]
Consider now the case $n>1$ and assume that $\left\{  u^{n}\left(  t,x\right)
,\left(  t,x\right)  \in\left[  0,T\right]  \times\mathbb{Q}_{p}^{N}\right\}
$ is a well-defined measurable process satisfying (\ref{EqC}). By the same
arguments as above, one proves that
\begin{equation}
E\left(  \left\vert \mathcal{I}^{n+1}\left(  t,x\right)  \right\vert
^{2}\right)  \leq C%
{\displaystyle\int\nolimits_{0}^{t}}
\sup_{z\in\mathbb{Q}_{p}^{N}}E\left(  1+\left\vert u^{n}\left(  s,z\right)
\right\vert ^{2}\right)
{\displaystyle\int\nolimits_{\mathbb{Q}_{p}^{N}}}
\left\vert \mathcal{F}\Gamma\left(  t-s\right)  \left(  \xi\right)
\right\vert ^{2}d\mu\left(  \xi\right)  ds, \label{EI_n+1}%
\end{equation}
and that%
\begin{equation}
E\left(  \left\vert \mathcal{J}^{n+1}\left(  t,x\right)  \right\vert
^{2}\right)  \leq C%
{\displaystyle\int\nolimits_{0}^{t}}
\sup_{y\in\mathbb{Q}_{p}^{N}}E\left(  1+\left\vert u^{n}\left(
t-s,x-y\right)  \right\vert ^{2}\right)
{\displaystyle\int\nolimits_{\mathbb{Q}_{p}^{N}}}
\Gamma\left(  s,y\right)  d^{N}yds. \label{EJ_n+1}%
\end{equation}
Hence the integrals $\mathcal{I}^{n+1}\left(  t,x\right)  $ and $\mathcal{J}%
^{n+1}\left(  t,x\right)  $ exist, so that $u^{n+1}$ is a well-defined
measurable process satisfying (\ref{EqC}).

\textbf{§2. }We now show that%
\begin{equation}
\sup_{n\geq0}\sup_{\left(  t,x\right)  \in\left[  0,T\right]  \times
\mathbb{Q}_{p}^{N}}E\left(  \left\vert u^{n}\left(  t,x\right)  \right\vert
^{2}\right)  <+\infty. \label{EqF}%
\end{equation}
Indeed, by using the estimates (\ref{EI_n+1})-(\ref{EJ_n+1}), we have%
\[
E\left(  \left\vert u^{n+1}\left(  t,x\right)  \right\vert ^{2}\right)  \leq
C\left(  1+%
{\displaystyle\int\nolimits_{0}^{t}}
\left(  1+\sup_{z\in\mathbb{Q}_{p}^{N}}E\left(  \left\vert u^{n}\left(
s,z\right)  \right\vert ^{2}\right)  \right)  \left(  J\left(  t-s\right)
+1\right)  \right)  ds.
\]
Now (\ref{EqF}) follows from the version of Gronwall's Lemma presented in
\cite[Lemma 15]{Dalang}.

\textbf{§3. }$u^{n}\left(  t,x\right)  $ $\underrightarrow{L^{2}\left(
\Omega\right)  }$\ \ $u\left(  t,x\right)  $\ uniformly in $x\in\mathbb{Q}%
_{p}^{N}$, $t\in\left[  0,T\right]  $.

Following the same ideas as in the proof of \cite[Theorem 13]{Dalang}, we take%
\[
M_{n}\left(  t\right)  :=\sup_{\left(  s,x\right)  \in\left[  0,t\right]
\times\mathbb{Q}_{p}^{N}}E\left(  \left\{  u^{n+1}\left(  s,x\right)
-u^{n}\left(  s,x\right)  \right\}  ^{2}\right)  .
\]
By using Proposition \ref{Prop2}, the Lipschitz property of $b$ and $\sigma$,
and by applying the same arguments as above, one gets%
\[
M_{n}\left(  t\right)  \leq C%
{\displaystyle\int\nolimits_{0}^{t}}
M_{n-1}\left(  s\right)  \left(  J\left(  t-s\right)  +1\right)  ds.
\]
Now by applying Gronwall's Lemma presented in \cite[Lemma 15]{Dalang}, we get%
\[
\lim_{n\rightarrow+\infty}\left(  \sup_{\left(  s,x\right)  \in\left[
0,t\right]  \times\mathbb{Q}_{p}^{N}}E\left(  \left\vert u^{n+1}\left(
s,x\right)  -u^{n}\left(  s,x\right)  \right\vert ^{2}\right)  \right)  =0.
\]
Hence $\left\{  u^{n}\left(  t,x\right)  \right\}  _{n\in\mathbb{N}}$
converges uniformly in $L^{2}\left(  \Omega\right)  $ to a limit $u\left(
t,x\right)  $. From this fact, we get
\begin{equation}
\lim_{n\rightarrow+\infty}\sup_{\left(  s,x\right)  \in\left[  0,T\right]
\times\mathbb{Q}_{p}^{N}}E\left(  \left\vert u^{n}\left(  t,x\right)
-u\left(  t,x\right)  \right\vert ^{2}\right)  =0\text{.} \label{EqG1}%
\end{equation}
Finally, by (\ref{EqG1})-(\ref{EqF}),%
\[
E\left(  \left\vert u\left(  t,x\right)  \right\vert ^{2}\right)
=\lim_{n\rightarrow+\infty}E\left(  \left\vert u^{n}\left(  t,x\right)
\right\vert ^{2}\right)  \leq\sup_{n\geq0}\sup_{\left(  t,x\right)  \in\left[
0,T\right]  \times\mathbb{Q}_{p}^{N}}E\left(  \left\vert u^{n}\left(
t,x\right)  \right\vert ^{2}\right)  <+\infty.
\]

\textbf{§4. }The process $\left\{  u\left(  t,x\right)  ,\left(  t,x\right)
\in\left[  0,T\right]  \times\mathbb{Q}_{p}^{N}\right\}  $ is $L^{2}\left(
\Omega\right)  $-continuous and has a jointly measurable version.

The proof of this fact is based on the following result. Let $\mathcal{L}$ be
a complete separable metric space, and $\mathcal{B}(\mathcal{L})$ the $\sigma
$-algebra of Borel sets of $\mathcal{L}$, and let $X_{s}$, $s\in\mathcal{L}$
be a real stochastic process on $\left(  \Omega,\mathcal{F},P\right)  $, where
real means $\left[  -\infty,+\infty\right]  $-valued. The process $X_{s}$,
$s\in\mathcal{L}$, is jointly measurable if the map $\left(  s,\omega\right)
\rightarrow X_{s}\left(  \omega\right)  $ is $\mathcal{B}(\mathcal{L}%
)\times\mathcal{F}$-measurable. Let $\mathcal{M}$ be the space of all real
random variables on $\left(  \Omega,\mathcal{F},P\right)  $ with the topology
of convergence in probability. Then $X_{s}$, $s\in\mathcal{L}$, has a jointly
measurable modification if and only if the map from $\mathcal{L}$\ to
$\mathcal{M}$ taking $s$ to $\left[  X_{s}\right]  $, the class of $X_{s}$ in
$\mathcal{M}$, is measurable, see (\cite{Chung-Doob})-(\cite[Theorem 3]{Cohn}).

In our case, $\mathcal{L}=\left(  \left[  0,T\right]  \times\mathbb{Q}_{p}%
^{N},d\right)  $ with
\begin{equation}
d\left(  \left(  t,x\right)  ,\left(  t^{\prime},x^{\prime}\right)  \right)
:=\max\left\{  \left\vert t-t^{\prime}\right\vert ,\left\Vert x-x^{\prime
}\right\Vert _{p}\right\}  . \label{metric}%
\end{equation}
Then $\mathcal{B}(\left[  0,T\right]  \times\mathbb{Q}_{p}^{N})=\mathcal{B}%
(\left[  0,T\right]  )\times\mathcal{B}(\mathbb{Q}_{p}^{N})$. It is sufficient
to show that the map from $\left[  0,T\right]  \times\mathbb{Q}_{p}^{N}$ to
$\mathcal{M}$ taking $\left(  t,x\right)  $ to $u\left(  t,x\right)  $ is
continuos in $L^{2}\left(  \Omega\right)  $. And since the convergence of
$u^{n+1}\left(  t,x\right)  $ to $u\left(  t,x\right)  $ is uniform in
$L^{2}\left(  \Omega\right)  $, it is sufficient to show that $u^{n+1}\left(
t,x\right)  $ is $L^{2}$-continuous. In order to do this, we have to verify
that
\begin{equation}
\lim_{h\rightarrow0}E\left(  \left\vert u^{n+1}\left(  t,x\right)
-u^{n+1}\left(  t+h,x\right)  \right\vert ^{2}\right)  =0\text{ \ }
\label{conver1}%
\end{equation}
and%
\begin{equation}
\lim_{x\rightarrow y}E\left(  \left\vert u^{n+1}\left(  t,x\right)
-u^{n+1}\left(  t,y\right)  \right\vert ^{2}\right)  =0. \label{conver2}%
\end{equation}
Indeed, (\ref{conver1}) implies that $x\rightarrow u^{n+1}\left(  t,x\right)
$ is uniformly continuous in $L^{2}\left(  \Omega\right)  $ and (\ref{conver2}%
) implies that $t\rightarrow u^{n+1}\left(  t,x\right)  $ is continuous in
$L^{2}\left(  \Omega\right)  $, therefore $\left(  t,x\right)  \rightarrow
u^{n+1}\left(  t,x\right)  $ is continuous in $L^{2}\left(  \Omega\right)  $.
The proof of this fact follows from Hypotheses A and B by using the technique
given in \cite{Dalang} to prove Lemma 19.

\textbf{§5. }$u\left(  t,x\right)  $ is a solution of (\ref{SolutionIVP}).

We set%
\[
\mathcal{I}\left(  t,x\right)  :=%
{\displaystyle\int\nolimits_{0}^{t}}
{\displaystyle\int\nolimits_{\mathbb{Q}_{p}^{N}}}
\Gamma\left(  t-s,x-y\right)  \sigma\left(  u\left(  s,y\right)  \right)
W\left(  ds,d^{N}y\right)
\]
and%
\[
\mathcal{J}\left(  t,x\right)  :=%
{\displaystyle\int\nolimits_{0}^{t}}
{\displaystyle\int\nolimits_{\mathbb{Q}_{p}^{N}}}
b\left(  u\left(  t-s,x-y\right)  \right)  \Gamma\left(  s\right)  d^{N}yds.
\]
In order to to establish Step 5, it is sufficient to show that
\begin{equation}
\lim_{n\rightarrow+\infty}\sup_{\left(  t,x\right)  \in\left[  0,T\right]
\times\mathbb{Q}_{p}^{N}}E\left(  \left\vert \mathcal{I}^{n}\left(
t,x\right)  -\mathcal{I}\left(  t,x\right)  \right\vert ^{2}\right)  =0,
\label{EqH}%
\end{equation}
and that
\begin{equation}
\lim_{n\rightarrow+\infty}\sup_{\left(  t,x\right)  \in\left[  0,T\right]
\times\mathbb{Q}_{p}^{N}}E\left(  \left\vert \mathcal{J}^{n}\left(
t,x\right)  -\mathcal{J}\left(  t,x\right)  \right\vert ^{2}\right)  =0.
\label{EqI}%
\end{equation}
To show (\ref{EqH}) we proceed as follows. By The Lipschitz property of
$\sigma$, Proposition \ref{Prop2} and Hypothesis A,%
\begin{align*}
&  E\left(  \left\vert \mathcal{I}^{n}\left(  t,x\right)  -\mathcal{I}\left(
t,x\right)  \right\vert ^{2}\right) \\
&  \leq E\left(  \left\vert
{\displaystyle\int\nolimits_{0}^{t}}
{\displaystyle\int\nolimits_{\mathbb{Q}_{p}^{N}}}
\Gamma\left(  t-s,x-y\right)  \left[  \sigma\left(  u^{n-1}\left(  s,y\right)
\right)  -\sigma\left(  u\left(  s,y\right)  \right)  \right]  W\left(
ds,d^{N}y\right)  \right\vert ^{2}\right) \\
&  \leq C%
{\displaystyle\int\nolimits_{0}^{t}}
\sup_{z\in\mathbb{Q}_{p}^{N}}\left(  \left\vert u^{n-1}\left(  s,z\right)
-u\left(  s,z\right)  \right\vert ^{2}\right)
{\displaystyle\int\nolimits_{\mathbb{Q}_{p}^{N}}}
\left\vert \mathcal{F}\Gamma\left(  t-s\right)  \left(  \xi\right)
\right\vert ^{2}d\mu\left(  \xi\right)  ds\\
&  \leq C\sup_{z\in\mathbb{Q}_{p}^{N}}\left(  \left\vert u^{n-1}\left(
s,z\right)  -u\left(  s,z\right)  \right\vert ^{2}\right)  ,
\end{align*}
this last term tends to zero as $n$ tend to infinity. The case (\ref{EqI}) can
be treated in a similar form. By the results of Paragraph 4, the process
$\left\{  u\left(  t,x\right)  ,\left(  t,x\right)  \in\left[  0,T\right]
\times\mathbb{Q}_{p}^{N}\right\}  $ has a measurable version that satisfies
(\ref{SolutionIVP}).

\textbf{§6. }$u\left(  t,x\right)  $ is the unique solution of
(\ref{SolutionIVP}) satisfying (\ref{EqExpe}).

This fact can be checked by using standards arguments.
\end{proof}

\begin{acknowledgement}
The author wishes to thank to Anatoly Kochubei and Sergii Torba for fruitful
discussions and remarks.\bigskip
\end{acknowledgement}

\end{document}